\def\R{\mathbb{R}}
\def\C{\mathbb C}
\def\Z{\mathbb Z}
\def\N{\mathbb N}
\def\Q{ \mathbb{Q}}
\def\Kappa{\mathrm{K}}
\def\M{{\mathscr M}}
\newcommand{\A}{\mathcal{A}}
\def\const{\operatorname{const}}
\renewcommand{\epsilon}{\varepsilon}
\title{On the finite cyclicity of open period annuli}
 \author{Lubomir Gavrilov \\
 \normalsize \it Institut de Math\'{e}matiques de Toulouse, UMR 5219\\
 \normalsize \it Universit\'{e}  de Toulouse \\  \normalsize \it 31062 Toulouse, France\\
 \\ Dmitry Novikov\\
  \normalsize \it Department of Mathematics\\
  \normalsize \it Weizmann Institute of Science\\
  \normalsize \it Rehovot, ISRAEL
  }
\date{July 2, 2008}
\begin{document}
\maketitle
\newtheorem{definition}{Definition}
\newtheorem{remark}{Remark}
\newtheorem{example}{Example}
\newtheorem{theorem}{Theorem} 
\newtheorem{lemma}{Lemma}
\newtheorem{proposition}{Proposition}
\newtheorem{corollary}{Corollary}
\vspace{5mm}
\begin{abstract}
Let $\Pi$ be an open, relatively compact period annulus of real analytic vector field $X_0$ on an analytic
surface. We prove that the maximal number of limit cycles which bifurcate from $\Pi$ under a given
multi-parameter analytic deformation $X_\lambda$ of $X_0$ is finite, provided that $X_0$ is either Hamiltonian,
or generic Darbouxian vector field.
\end{abstract}

\section{Statement of the result}
Let $S$ be a real analytic surface without border (compact or not), and $X_0$ a
real analytic vector field on it. An open period annulus of $X_0$ is an union
of period orbits of $X_0$ which is bi-analytic to the standard open annulus
$S^1\times (0,1)$, the image of each circle $S^1\times \{u\}$ being a periodic
orbit of $X_0$.

Let $X_\lambda$, $\lambda\in (\R^n,0$) be an analytic family of analytic vector fields on  $S$, and let
 $\Pi$ be an open period annulus of $X_0$.  The \emph{cyclicity} $Cycl(\Pi,X_\lambda)$ of  $\Pi$ with respect to
the deformation $X_\lambda$ is the maximal number of limit cycles of $X_\lambda$ which tend to $\Pi$ as
$\lambda$ tends to zero, see Definition \ref{cyclicity} bellow. Clearly the vector field $X_0$ has an analytic
first integral $f$ in the period annulus $\Pi$ which has no critical points. In what follows we shall suppose
that
 the open period annulus $\Pi$ is relatively compact (i.e. its closure $\bar{\Pi}\subset S$ is
    compact).
\begin{definition}
We shall say that $X_0$ is a Hamiltonian vector field provided that it has a
first integral with isolated critical points in a complex neighborhood of
$\Pi$. We shall say that $X_0$ is a generic Darbouxian vector field provided
that all singular points of $X_0$ in a neighborhood of $\bar{\Pi}$ are
orbitally analytically equivalent to linear saddles $\dot x=\lambda x, \dot
y=-y$ with $\lambda>0$.
\end{definition}
\begin{remark}
In the case when $X_0$ is a generic Darbouxian vector field, as we shall
see in the next section, it can be covered by a planar Darbouxian vector field with a first integral of the "Darboux type"
$H=\prod_{i=1}^n P_i^{\lambda_i}$ for some analytic functions $P_i$ in a
complex neighborhood of $\Pi$.
\end{remark}

The main result of the paper are the following
\begin{theorem}
\label{main1} The cyclicity $Cycl(\Pi,X_\lambda)$ of the open period annulus
$\Pi$ of a Hamiltonian  vector field $X_0$ is finite.
\end{theorem}
\begin{theorem}
\label{main2} The cyclicity $Cycl(\Pi,X_\lambda)$ of the open period annulus
$\Pi$ of a  generic Darbouxian vector field $X_0$ is finite.
\end{theorem}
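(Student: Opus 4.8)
The plan is to translate finiteness of cyclicity into a uniform bound on the number of zeros of the displacement map, and then to bound that number using the algebraic nature of the higher Melnikov functions. First I would fix a real-analytic cross-section $\sigma$ transverse to $X_0$ inside $\Pi$, parametrized by the value $t\in(0,1)$ of the first integral $f$ (which has no critical points in $\Pi$), and form the first-return map $\mathcal P_\lambda\colon\sigma\to\sigma$. It is analytic in $(t,\lambda)$ with $\mathcal P_0=\mathrm{id}$, so the displacement $\delta(t,\lambda):=\mathcal P_\lambda(t)-t$ is analytic with $\delta(\cdot,0)\equiv0$; a limit cycle of $X_\lambda$ tending to $\Pi$ is an isolated zero of $\delta(\cdot,\lambda)$ whose $\lambda\to0$ limit is a point $t_0\in[0,1]$ (an interior point corresponding to the orbit $\gamma_{t_0}$, an endpoint to a boundary set of $\Pi$). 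Since cyclicity is upper semicontinuous along these limit periodic sets and $[0,1]$ is compact, it suffices to prove finiteness of the local cyclicity of $\delta$ at every $t_0\in[0,1]$.

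Fix $t_0$ and expand $\delta(t,\lambda)=\sum_{|\alpha|\ge1}c_\alpha(t)\lambda^\alpha$, where the coefficients $c_\alpha$ are the higher-order Melnikov functions, produced by the Fran\c{c}oise recursion as iterated integrals of analytic $1$-forms along the orbits $\gamma_t$, and are analytic in a (possibly punctured) neighbourhood of $t_0$. Let $\mathcal B$ be the Bautin ideal at $t_0$, generated by the germs of all the $c_\alpha$ in the local ring of germs with the admissible behaviour (analytic, or logarithmic at an endpoint); this ring is Noetherian, so $\mathcal B=(c_1,\dots,c_N)$ for finitely many of these coefficients, and a standard division lemma with uniform remainder gives $\delta(t,\lambda)=\sum_{i=1}^Nc_i(t)h_i(t,\lambda)$ with the $h_i$ analytic near $(t_0,0)$ and jointly capturing $\mathcal B$. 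The classical Bautin--Roussarie estimate then bounds the number of isolated zeros of $\delta(\cdot,\lambda)$ near $t_0$, for $\lambda$ small, by $N-1$ plus the maximal number of zeros near $t_0$ of an element of the real vector space $V:=\operatorname{span}_{\R}(c_1,\dots,c_N)$ (a finite iteration in $\lambda$ absorbs the $h_i$). For interior $t_0$ this maximum is trivially finite, so the interior of $\Pi$ never produces infinitely many cycles and this step uses only analyticity; everything depends on the endpoints.

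There the hypotheses are essential. Suppose $X_0$ is Hamiltonian with isolated critical points. Complexifying $f$, the level curves form near $\bar\Pi$ an analytic family of affine curves of bounded topology whose boundary fibre has only isolated singularities; its relative de Rham cohomology is then finite dimensional, and the Gauss--Manin connection has a regular singularity at the endpoint value. From this one deduces that each generator $c_i$ satisfies a fixed linear ODE that is Fuchsian at $t_0$, so $V$ lies in the finite-dimensional solution space of a single Fuchsian system with a regular singularity at $t_0$; the argument-principle (Petrov-type) bound for zeros of solutions of a fixed Fuchsian system then bounds the number of zeros near $t_0$ of every $v\in V$, finishing the local estimate. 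If instead $X_0$ is generic Darbouxian, one first lifts to the planar Darbouxian cover carrying a Darboux first integral $H=\prod_iP_i^{\lambda_i}$ as in the Remark; the linear-saddle assumption guarantees that the corresponding logarithmic (pseudo-abelian) cohomology is again finite dimensional with a regular singularity, the $c_i$ become pseudo-iterated integrals solving such a Fuchsian system, and the same argument applies.

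Assembling the finite local bounds over the compact $[0,1]$ gives $Cycl(\Pi,X_\lambda)<\infty$ in both settings, i.e. Theorem~\ref{main1} and Theorem~\ref{main2}. The main obstacle is the structural claim used at the endpoints: the Fran\c{c}oise recursion a priori yields iterated integrals of unboundedly growing length, and one must prove that the finitely many generators of the Bautin ideal nonetheless obey fixed Fuchsian equations with only a regular singularity at the boundary of the annulus. This is exactly where a generic real-analytic $X_0$ would fail, and where "Hamiltonian" (isolated critical points $\Rightarrow$ finite cohomology and regular Gauss--Manin) and "generic Darbouxian" (linear saddles $\Rightarrow$ the logarithmic analogue) are indispensable; the uniform-remainder division, the argument-principle zero count, and the semicontinuity/compactness bookkeeping, while requiring care, are comparatively routine.
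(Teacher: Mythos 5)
Your strategy for the Darbouxian case has a genuine gap, and it is precisely the one the paper is designed to circumvent. You propose that after lifting to a planar cover with Darboux first integral $H=\prod_i P_i^{\lambda_i}$, the linear-saddle hypothesis gives a finite-dimensional ``logarithmic (pseudo-abelian) cohomology with regular singularity,'' so that the Bautin generators $c_i$ satisfy a fixed Fuchsian system near the boundary and a Petrov-type argument-principle count finishes the job. This does not work when the exponents $\lambda_i$ are irrational (the generic Darbouxian case), and the paper says so explicitly: ``In the Darbouxian case the above argument does not apply (there is no Fuchsian equation satisfied by $M_k$).'' The obstruction is structural. The level sets $\{H=t\}$ are not algebraic and the monodromy of the iterated integrals over them has eigenvalues of the form $e^{2\pi i\,\lambda_j/\lambda_i}$, which are in general not roots of unity, so quasi-unipotence fails; moreover the exponents appearing in the local expansion at $t=0$ range over $\lambda_1^{-1}\Z+\lambda_2^{-1}\Z$, which is dense in $\R$, so there is no regular-singular ODE of bounded order whose solution space contains the $c_i$ and from which one could read off a bound on zeros by Fuchs theory. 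Your phrase ``the same argument applies'' is exactly the step that cannot be carried out; the Hamiltonian hypothesis buys quasi-unipotence of the Gauss--Manin monodromy and the Darbouxian one does not.

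The paper's actual route for Theorem~\ref{main2} is quite different. After the same reduction to a one-parameter deformation (Proposition~\ref{ppm}) and the planarization of Section~\ref{plane}, the higher-order Melnikov function $M_k$ is written as a polynomial, with coefficients depending only on $X_0$ and $k$, in \emph{elementary iterated integrals} over the pieces of the separatrix polygon lying in linearizing charts of the saddles (Lemma~\ref{lem:mellin of iterated}). Each elementary integral is then analyzed via its Mellin transform, which is an explicit convergent sum of generalized compensators $\ell^l_\alpha$ (Lemma~\ref{lem:mellin elemint}). From the explicit pole structure one obtains a divergent asymptotic series in $t^\mu(\log t)^j$ with controlled tail estimates (Theorem~\ref{thm:elemint}, Lemma~\ref{lem:upper bound monomial}, Theorem~\ref{thm:int}). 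The payoff is a quasianalyticity statement (Corollary~\ref{cor:quasian elemint}): if the asymptotic series of such a polynomial vanishes, the function vanishes, and otherwise its leading term $t^\mu P(\log t)$ controls it near $0$, so isolated zeros cannot accumulate at $0$. This Mellin-transform/quasianalyticity machinery is not a cosmetic variant of the Fuchsian argument; it is what replaces it when irrational exponents destroy regularity of the ODE. If you want to salvage your outline, you need to abandon the Fuchsian/Petrov step at the boundary and instead control the asymptotic expansion of the $c_i$ directly, which is the content of Section~\ref{proof2}.
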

The above theorems are a particular case of the Roussarie's conjecture
\cite[p.23]{rous98} which claims that the cyclicity $Cycl(\Gamma,X_\lambda)$ of
every compact invariant set $\Gamma$ of $X_0$ is finite. Indeed, as $\Pi$ is
relatively compact, then $Cycl(\Pi,X_\lambda) \leq Cycl(\bar{\Pi},X_\lambda)$.
The finite cyclicity of the open period annulus without the assumptions of
Theorems \ref{main1} and \ref{main2} is an open question.

To prove the finite cyclicity we note first that it suffices to show the finite
cyclicity of a given one-parameter deformation $X_\varepsilon$. This argument
is based on the Hironaka's desingularization theorem, see \cite{rous01,gav0}.
Consider the first return map associated to $\Pi$ and $X_\varepsilon$
$$
t\rightarrow t+ \varepsilon^k M_k(t)+\dots, t\in (0,1), \varepsilon \sim 0 .
$$
The cyclicity of the open period annulus $\Pi$ is finite if and only if the
Poincar\'{e}-Pontryagin function $M_k$ has a finite number of zeros in $(0,1)$. It
has been shown in \cite{gav} that $M_k$ allows an integral representation as a
linear combination of iterated path integrals along the ovals of $\Pi$ of
length at most $k$. The finite cyclicity  follows then from the
non-accumulation of zeros of such iterated integrals at $0$ and $1$. The proof
of this fact will be different in the Hamiltonian and in the generic Darbouxian
case.

In the Hamiltonian case we observe that $M_k$ satisfies a Fuchsian equation
\cite{gav1,gav}. We prove in section \ref{proof1} that the associated monodromy
representation is quasi-unipotent, which implies the desired property.

In the Darbouxian case the above argument does not apply (there is no Fuchsian
equation satisfied by $M_k$). We prove the non-oscillation property of an
iterated integral by making use of its Mellin transformation, along the lines
of \cite{novikov}. It seems to be difficult to remove the genericity assumption
in the Darbouxian case (without this the Hamiltonian case is a sub-case of the
Darbouxian one).

The paper is organized as follows. In section \ref{ppm} we recall the
definition of cyclicity and the reduction of multi-parameter to one-parameter
deformations. In section \ref{plane} we reduce the case of a vector field on a
surface to the case of a plane vector field. Theorem \ref{main1}  is proved in
section \ref{proof1} according to the scheme
 $$\mbox{Proposition \ref{quasi}} \Rightarrow   \mbox{Proposition \ref{prop3}}  \Rightarrow \mbox{Proposition \ref{th4} }  $$
$$\{ \mbox{Theorem \ref{th2}  + Proposition \ref{th4}} \} \Rightarrow \mbox{Theorem \ref{th3}}$$
$$
\mbox{Theorem \ref{th3}} \Rightarrow \mbox{Theorem \ref{main1}} .$$
Theorem~\ref{main2}  is proved in section~\ref{proof2} of the paper.

{\bf Acknowledgments.} Part of this paper was written while the second author
was visiting  the University of Toulouse (France). He is obliged for the
hospitality.

\section{Cyclicity and non-oscillation of the Poincar\'{e}-Pontryagin-Melnikov function}
\label{ppm}
\begin{definition}
\label{cyclicity}
 Let $X_\lambda$ be a family of analytic real  vector fields on a surface $S$, depending analytically on a
parameter $\lambda \in (\R^n,0)$, and let $ K \subset S$ be a compact invariant set of $X_{\lambda_0}$. We say
that the pair $(K, X_{\lambda_0}$) has cyclicity $N = Cycl((K,X_{\lambda_0}), X_\lambda)$ with respect to the
deformation $X_\lambda$, provided that $N$ is the smallest integer having the property: there exists
$\varepsilon_0 > 0$ and a neighborhood $V_K$ of $K$, such that for every $\lambda$, such that $\|\lambda-
\lambda_0\| < \varepsilon_0$, the vector field $X_\lambda$  has no more than N limit cycles contained in $V_K$.
If $\tilde{K}$ is an invariant set of $X_{\lambda_0}$ (possibly non-compact), then the cyclicity of the pair
$(\tilde{K} , X_{\lambda_0})$ with respect to the deformation $X_\lambda$ is
$$
Cycl((\tilde{K} , X_{\lambda_0}),X_\lambda) = sup\{Cycl((K,X_{\lambda_0}), X_\lambda) : K \subset  \tilde{K} , K
\mbox{   is a compact }\}.
$$
\end{definition}
The above definition implies that when $\tilde{K} $ is an open invariant set, then its cyclicity $Cycl((\tilde{K} , X_{\lambda_0}), X_\lambda)$ is  the maximal number of limit cycles
which tend to $\tilde{K}$ as $\lambda$ tends to $0$.
To simplify the notation, and if there is no danger of
confusion, we shall write $ Cycl(K  ,X_\lambda) $ on the place of $ Cycl((K , X_{\lambda_0}),X_\lambda) . $

\begin{example}
Let $f_\varepsilon(t)= \varepsilon e^{-1/t}(t sin(1/t) - \varepsilon), f_\varepsilon(0)=0$. One can easily see that $f_\varepsilon(t)=0$ has finite number of isolated positive zeros for each $\varepsilon$, and this number tends to infinity as $\varepsilon\to 0$. Below we construct a germ $X_\varepsilon$ of a vector field having a monodromic planar singular point at the origin, with a return map $x\to x+f_\varepsilon(x)$. Since isolated singular points of the return map correspond  to limit cycles, we see that the vector field $X_\varepsilon$ has  a finite number of limit cycles for each $\varepsilon$, and this number tends to infinity as $\varepsilon$ tends to zero. So the cyclicity of the open period annulus $\Pi = \R^2\setminus \{0\}$ is infinity. Note that, however, the vector field $X_\varepsilon$ is not analytic at the origin.

Here is a construction: on the strip $S=[0,\delta]\times \R$ consider the equivalence relation $(r,\phi)\sim(r+f_\varepsilon(r), \phi-2\pi)$. Let $p:S\to S/\sim$ be the corresponding projection,  and define $\tilde{X}_\varepsilon=p_*(\partial_\phi)$. One can check that for $\delta$ small enough thus defined $\tilde{X}_\varepsilon$ is a blow-up of a smooth vector field $X_\varepsilon$ defined near the origin, and the  return map of $X_\varepsilon$ is as prescribed by construction.
\end{example}

Let $\Delta \subset S$ be a  cross-section of the period annulus $\Pi$ which can be identified to the interval
$(0,1)$. Choose a local parameter  $u$ on $\Delta$. Let $u \mapsto P(u,\lambda)$ be the first return map
 and $\delta(u,\lambda)= P(u,\lambda) - u$ the displacement function of $X_\lambda$. For every closed interval
 $[a,b]\subset \Delta$ there exists $\varepsilon_0 > 0$ such that the displacement function
 $\delta(u,\lambda)$ is well defined and analytic in
 $\{ (u,\lambda):    a-\varepsilon_0 < u < b+ \varepsilon_0, \| \lambda
 \|<\varepsilon_0\}$. For every fixed $\lambda$ there is a one-to-one correspondance between isolated zeros of
 $\delta(u,\lambda)$ and limit cycles of
 the vector field $X_\lambda$.

 Let $u_0\in
\Delta$ and let us expand
$$
\delta(u,\lambda)= \sum_{i=0}^\infty a_i(\lambda) (u-u_0)^i.
$$
\begin{definition}[Bautin ideal \cite{rous89}, \cite{rous98}]
We define the Bautin ideal $\mathcal{I}$ of $X_\lambda$ to be the ideal generated by the germs $\tilde{a}_i$ of
$a_i$ in the local ring $\mathcal{O}_0(\R^n)$ of analytic germs of functions at $0\in \R^n$.
\end{definition}
This ideal is Noetherian. Let $\tilde{\varphi}_1,\tilde{\varphi}_2,\dots,\tilde{\varphi}_p$ be a minimal
system of its generators, where $p = \dim_\R \mathcal{I} / \mathcal{MI}$, and $\mathcal{M}$ is the maximal ideal of
the local ring $\mathcal{O}_0(\R^n)$. Let $\varphi_1,\varphi_2,\dots,\varphi_p$ be analytic functions
representing the generators  of the Bautin ideal in a neighborhood of the origin in $\R^n$.
\begin{proposition}[Roussarie, \cite{rous98}]
\label{bautin}
 The Bautin ideal
does not depend on the point $u_0\in \Delta$. For every $[a,b] \subset \Delta$ there is an open neighborhood $U$
of $[a,b]\times \{0\}$ in $\R\times \R^n$ and analytic functions $h_i(u,\lambda)$ in $U$, such that
\begin{equation}\label{b1}
\delta(u,\lambda)= \sum_{i=0}^p \varphi_i(\lambda) h_i(u,\lambda) .
\end{equation}
The real vector space generated by the functions $h_i(u,0), u\in [a,b]$ is of dimension $p$.
\end{proposition}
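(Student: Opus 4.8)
The plan is to establish the three assertions of the proposition—independence of the Bautin ideal on $u_0$, the factorization \eqref{b1}, and the dimension count—by a standard Noetherianity-plus-division argument, following Roussarie's scheme. First I would observe that the displacement function $\delta(u,\lambda)$ is analytic on a neighborhood of $[a,b]\times\{0\}$ (as noted in the text), so it admits a convergent expansion in $(u-u_0)$ with coefficients $a_i(\lambda)$ analytic near $0$; the germs $\tilde a_i$ generate the ideal $\mathcal I$. To see that $\mathcal I$ is independent of $u_0$, I would relate the Taylor coefficients at two nearby points $u_0, u_1$ by a (locally finite, invertible) linear change coming from the Taylor shift: each $a_i(\cdot)$ at $u_1$ is an infinite convergent combination of the $a_j(\cdot)$ at $u_0$ with scalar coefficients, and conversely. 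Since $\mathcal I$ is Noetherian, it is finitely generated, say by $\tilde a_0,\dots,\tilde a_N$; then every $\tilde a_i$ at $u_1$ lies in the ideal generated by finitely many $\tilde a_j$ at $u_0$, and symmetrically, so the two ideals coincide. A connectedness argument on $\Delta$ upgrades this to global independence.

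Next, for the factorization, fix a minimal generating set $\tilde\varphi_1,\dots,\tilde\varphi_p$ of $\mathcal I$, represented by analytic functions $\varphi_j$ near $0\in\R^n$. By Noetherianity, only finitely many coefficients $a_0,\dots,a_N$ already generate $\mathcal I$, so each $a_i$ ($i>N$) and indeed each $a_i$ ($i\le N$) can be written $a_i(\lambda)=\sum_{j=1}^p \varphi_j(\lambda)\,c_{ij}(\lambda)$ with $c_{ij}$ analytic near $0$. Substituting into the power series for $\delta$ and interchanging summation (justified by uniform convergence on a compact polydisk times a small $\lambda$-ball, using Artin–Rees / boundedness of the $c_{ij}$ to control tails), I would obtain $\delta(u,\lambda)=\sum_{j=1}^p\varphi_j(\lambda)\,h_j(u,\lambda)$ with $h_j(u,\lambda)=\sum_i c_{ij}(\lambda)(u-u_0)^i$ analytic on a neighborhood $U$ of $[a,b]\times\{0\}$; shrinking $U$ if necessary handles convergence over the whole interval $[a,b]$ via the compactness already invoked in the text.

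For the dimension statement, the key point is that $h_j(u,0)$ recovers the "principal parts." Because $\tilde\varphi_1,\dots,\tilde\varphi_p$ is a \emph{minimal} system of generators, $p=\dim_\R \mathcal I/\mathcal{MI}$, and one shows that the matrix $(c_{ij}(0))$ has rank $p$: otherwise a nontrivial $\R$-linear relation among the $\tilde\varphi_j$ modulo $\mathcal{MI}$ would follow from the relations among the $a_i$, contradicting minimality. Rank $p$ of $(c_{ij}(0))$ forces the functions $h_1(\cdot,0),\dots,h_p(\cdot,0)$ on $[a,b]$ to be $\R$-linearly independent (a dependence would, after pairing with the dual basis picking out $(c_{ij}(0))$, produce a dependence among columns). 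Conversely they span a space of dimension at most $p$ by construction, so the dimension is exactly $p$.

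\textbf{Main obstacle.} The delicate step is the interchange of summations in the factorization: one must produce the bounds $|c_{ij}(\lambda)|\le C\rho^{-i}$ uniformly for small $\lambda$, which is where Artin–Rees (or an explicit division in the ring of convergent power series with parameters) is needed to guarantee that the "remainder" series $h_j$ genuinely converge on a full neighborhood of $[a,b]\times\{0\}$ and not merely formally. The independence of $u_0$ is conceptually easy but requires care that the Noetherian bound $N$ can be chosen locally uniformly in $u_0$ so that the connectedness argument goes through.
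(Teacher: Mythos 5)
The paper does not prove this proposition; it is stated as a result of Roussarie and cited to \cite{rous98}, so your proposal must be judged against the standard argument rather than anything in this text. Your outline has the right skeleton and you correctly flag the interchange-of-summations step as the delicate one. But the fix you propose --- produce uniform bounds $|c_{ij}(\lambda)| \le C\rho^{-i}$ --- does not work as stated: the multipliers $c_{ij}$ in $a_i = \sum_j \varphi_j c_{ij}$ are highly non-unique, and there is no reason a given choice of them obeys any geometric bound in $i$; Artin--Rees by itself does not supply such a bound. The clean way to close this gap is to avoid building the $h_j$ termwise. One shows directly that the germ of $\delta$ at $(u_0,0)$ lies in the extended ideal $J = (\varphi_1,\dots,\varphi_p)\cdot\mathcal{O}_{(u_0,0)}(\R\times\R^n)$ by Krull's intersection theorem: for every $n$ the Taylor polynomial $\sum_{i<n}a_i(\lambda)(u-u_0)^i$ lies in $J$ (each $a_i\in\mathcal I$), and $\delta$ minus it lies in the ideal $\left((u-u_0)^n\right)$; since $(u-u_0)$ is contained in the maximal ideal of this Noetherian local ring, $\bigcap_n\left(J+\left((u-u_0)^n\right)\right)=J$, so $\delta\in J$. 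Coherence of the ideal sheaf generated by the $\varphi_j$ then produces $h_j$ analytic on a full neighborhood of $[a,b]\times\{0\}$, not merely near each $u_0$. The same Krull argument is what legitimizes the infinite Taylor-shift sums in your $u_0$-independence step, which face exactly the same difficulty as the factorization.

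The dimension-count paragraph also has a gap. A linear dependence among the rows of $(c_{ij}(0))$ does not, by itself, yield ``a nontrivial $\R$-linear relation among the $\tilde\varphi_j$ modulo $\mathcal{MI}$'': the $c_{ij}$ are multipliers, not generators, and minimality of $\{\tilde\varphi_j\}$ says nothing directly about them. The argument that works is a change of generators followed by Nakayama. If $\sum_j d_j h_j(\cdot,0)\equiv 0$ with, say, $d_p=1$, set $\tilde h_p = h_p + \sum_{j<p}d_j h_j$ (so $\tilde h_p(\cdot,0)\equiv 0$), $\tilde h_j=h_j$ for $j<p$, and $\tilde\varphi_j = \varphi_j - d_j\varphi_p$ for $j<p$, $\tilde\varphi_p=\varphi_p$. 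Then $\delta = \sum_j\tilde\varphi_j\tilde h_j$ still holds, and every $u$-Taylor coefficient of $\tilde\varphi_p\tilde h_p$ lies in $\mathcal{MI}$; hence every $a_i$ lies in $(\tilde\varphi_1,\dots,\tilde\varphi_{p-1})+\mathcal{MI}$, so $\mathcal I\subset(\tilde\varphi_1,\dots,\tilde\varphi_{p-1})+\mathcal{MI}$, and Nakayama forces $\mathcal I=(\tilde\varphi_1,\dots,\tilde\varphi_{p-1})$, contradicting $p=\dim_\R\mathcal I/\mathcal{MI}$.
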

Suppose that the Bautin ideal is principal and generated by $\varphi(\lambda)$. Then
\begin{equation}\label{principal}
    \delta(u,\lambda)= \varphi(\lambda) h(u,\lambda)
\end{equation}
where $h(u,0)\not\equiv0$. The maximal number of the isolated zeros of $h(u,\lambda)$ on a closed interval
$[a,b]\subset (0,1)$  for sufficiently small $|\lambda|$ is bounded by the number of the zeros of $h(u,0)$,
counted with multiplicity, on $[a,b]$. This follows from the Weierstrass preparation theorem, properly applied,
see \cite{gav0}. Therefore to prove the finite cyclicity of $\Pi$ it is enough to show that $h(u,0)$ has a
finite number of zeros on $(0,1)$. Consider a germ of analytic curve $\xi : \varepsilon \mapsto
\lambda(\varepsilon)$, $\lambda(0)=0$, as well the analytic one-parameter family of vector fields
$X_{\lambda(\varepsilon)}$. The Bautin ideal is principal, $\delta(u,\varepsilon)= \varphi(\varepsilon)
h(u,\varepsilon)$, and
$$
 \delta(u,\lambda(\varepsilon))=  \varepsilon^k  M_k(u) + \dots,  M_k(u)= c \, h(u,0), c\neq 0
$$
where the dots stay for terms containing $\varepsilon^i$, $i\geq k$. $M_k$ is the so called $k$th order higher
Poincar\'{e}-Pontryagin-Melnikov function associated to the one-parameter deformation $X_{\lambda(\varepsilon)}$ of
the vector field $X_0$. Therefore, if the cyclicity of the open period annulus is  infinite, then $M_k $ has an
infinite number of zeros on the interval $(0,1)$

Of course, in general  the Bautin ideal is not principal. However, by making use of the Hironaka's theorem, we can
always principalize it. More precisely, after several blow up's of the origin of the parameter space, we can
replace the Bautin ideal by an ideal sheaf which is principal, see \cite{gav0} for details. This proves the
following
\begin{proposition}
\label{ppm} If the cyclicity $ Cycl(K  ,X_\lambda) $ of the open period annulus $\Pi$ is infinite, then there
exists a one parameter deformation $\lambda=\lambda(\varepsilon)$, such that the corresponding higher order
Poincar\'{e}-Pontryagin-Melnikov function $M_k $ has an infinite number of zeros on the interval $(0,1)$.
\end{proposition}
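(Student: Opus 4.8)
The plan is to derive this proposition as a contrapositive packaging of the material developed in this section, using the Hironaka desingularization of the Bautin ideal. Suppose $Cycl(\Pi,X_\lambda)=\infty$. By Definition \ref{cyclicity} (and the remark following it, since $\Pi$ is open), this means that for every compact $K\subset\Pi$, every neighborhood $V_K$ and every $\varepsilon_0>0$, there is a parameter $\lambda$ with $\|\lambda\|<\varepsilon_0$ for which $X_\lambda$ has arbitrarily many limit cycles in $V_K$; equivalently, the displacement function $\delta(u,\lambda)$ has an unbounded number of isolated zeros in $(0,1)$ as $\lambda\to 0$. The goal is to produce a single analytic arc $\varepsilon\mapsto\lambda(\varepsilon)$, $\lambda(0)=0$, along which the leading term $M_k$ already has infinitely many zeros on $(0,1)$.

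First I would recall the factorization machinery. By Proposition \ref{bautin} we have, on a neighborhood $U$ of $[a,b]\times\{0\}$ for each $[a,b]\subset(0,1)$, the representation $\delta(u,\lambda)=\sum_{i=0}^p\varphi_i(\lambda)h_i(u,\lambda)$ with the $\varphi_i$ generating the Noetherian Bautin ideal $\mathcal{I}$. The key step is principalization: by Hironaka's theorem there is a proper analytic (in fact, a finite composition of blow-ups) map $\pi\colon\widetilde{W}\to(\R^n,0)$ such that the pulled-back ideal sheaf $\pi^*\mathcal{I}\cdot\mathcal{O}_{\widetilde{W}}$ is locally principal; this is exactly the construction carried out in \cite{gav0}, and I would invoke it as a black box. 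On each chart of $\widetilde{W}$ we then have $\delta(u,\pi(\mu))=\varphi(\mu)\,h(u,\mu)$ with $h(\cdot,\mu)\not\equiv 0$, the situation of \eqref{principal}. Now if the cyclicity were finite on every chart, a compactness argument over the exceptional fibre $\pi^{-1}(0)$ (which is compact) would bound the cyclicity downstairs, a contradiction; hence on some chart, centered at a point $\mu_0\in\pi^{-1}(0)$, the factor $h(u,0)$ must have infinitely many zeros on $(0,1)$ — otherwise the Weierstrass-preparation argument recalled in the text would bound the zeros of $h(u,\mu)$ for small $|\mu|$ by those of $h(u,0)$, again giving finite cyclicity.

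Having located such a chart and such a point $\mu_0$, I would choose an analytic arc $\varepsilon\mapsto\mu(\varepsilon)$ through $\mu_0$ inside the chart, transverse enough that $\mu(\varepsilon)$ leaves the zero locus of $\varphi$ for $\varepsilon\neq 0$, and set $\lambda(\varepsilon)=\pi(\mu(\varepsilon))$, an analytic arc in $(\R^n,0)$ with $\lambda(0)=0$. Along this arc, $\delta(u,\lambda(\varepsilon))=\varphi(\mu(\varepsilon))h(u,\mu(\varepsilon))$; writing $\varphi(\mu(\varepsilon))=c\varepsilon^k+\cdots$ with $c\neq 0$, the leading coefficient in $\varepsilon$ of the displacement is $M_k(u)=c\,h(u,0)$ up to the nonzero constant, exactly as in the displayed formula preceding the statement. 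Since $h(u,0)$ has infinitely many zeros on $(0,1)$, so does $M_k$, which is the assertion.

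The main obstacle is the compactness/patching step: making precise that finite cyclicity on each local chart of the desingularization, together with compactness of the exceptional divisor $\pi^{-1}(0)$ and properness of $\pi$, forces finite cyclicity of $\Pi$ downstairs — so that its failure must be visible, in a strong pointwise sense ($h(u,0)$ having infinitely many zeros), on at least one chart. This is the heart of the reduction from multi-parameter to one-parameter deformations, and it is precisely the place where one must quote the careful treatment in \cite{rous01,gav0}; the rest of the argument is the routine Weierstrass-preparation bookkeeping already sketched in the surrounding text.
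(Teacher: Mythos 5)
Your proposal is correct and follows essentially the same route as the paper: factor the displacement via the Bautin ideal (Proposition \ref{bautin}), principalize by Hironaka blow-ups as in \cite{gav0}, invoke the Weierstrass-preparation bound together with compactness of the exceptional fibre, and read off $M_k = c\,h(\cdot,0)$ along a suitable arc. The paper presents this more tersely (deferring the compactness/patching step to \cite{gav0}), but the decomposition, the key references, and the logic of the contrapositive are the same.
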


In the next two sections we shall prove the non-oscillation property of $M_k$ in the Hamiltonian and the
Darbouxian case (under the restrictions stated in Theorem \ref{main2}).

\section{Reduction to the case of a plane vector field}
\label{plane}

Let $X_0$ be a real analytic vector field on a real analytic surface $S$. Let
$\Pi$ be an open period annulus of $X_0$ with compact closure. Let the map
$\tau:\Pi\to S^1\times (0,1)$ be a bi-analytic isomorphism, such that
$\delta_t=\tau^{-1}\left(S^1\times\{t\}\right)$ is a closed orbit of  $X_0$. We assume that $X_0$ is either Hamiltonian or generalized Darbouxian in some neighborhood of the closure
$\bar{\Pi}$ of $\Pi$. Theorems  \ref{main1} and \ref{main2} claims that cyclicity of
$\Pi$ in any family of analytic deformation $X_{\lambda}$ of $X_0$ is finite.

This paragraph is devoted to the reduction of this general situation to the
case of a vector field $X_0$ on $\R^2$ of Hamiltonian or Darboux type near its
polycycle. Then Theorem~\ref{main1} and Theorem~\ref{main2} follow from Theorem~\ref{main1'} and Theorem~\ref{main2'}
below.

First, note that it is enough to prove finite cyclicity of
$\tau^{-1}\left(S^1\times(0,\epsilon)\right)$ only. Indeed, finite cyclicity of
$\tau^{-1}\left(S^1\times[\epsilon, 1-\epsilon]\right)$ follows from
Gabrielov's theorem, and finite cyclicity of
$\tau^{-1}\left(S^1\times(1-\epsilon,1)\right)$ can be reduced to the above by
replacing $t$ by $1-t$.

Consider the Hausdorf limit
$\Gamma=\lim_{t\to0}\tau^{-1}\left(S^1\times\{t\}\right)$. It is a connected
union of several fixed points $a_1,...,a_n$ of $X_0$ (not necessarily pairwise
different) and orbits $\Gamma_1,..., \Gamma_n$ of $X_0$ such that $\Gamma_i$
exits from $a_i$ and enters $a_{i+1}$ (where $a_{n+1}$ denotes $a_1$).

From now on we consider only a sufficiently small neighborhood $U$ of $\Gamma$.
We assume that $U\cap\Pi=\tau^{-1}\left(S^1\times(0,\epsilon)\right)$, and
denote this intersection again by $\Pi$. We consider first the Darbouxian case. Note that $\Gamma$ cannot consist of just one singular point of $X_0$ by assumption about linearizability of singular points of $X_0$ in this case.

\begin{lemma}\label{lem:covering}
Assume that Theorem~\ref{main2} holds if $U$ is orientable and all $a_i$ are
different. Then Theorem~\ref{main2} holds in full generality.
\end{lemma}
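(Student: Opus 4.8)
The goal is to reduce the proof of Theorem~\ref{main2} to the case where the neighborhood $U$ of $\Gamma$ is orientable and the singular points $a_1,\dots,a_n$ are pairwise distinct. The natural tool is a finite covering $\pi:\tilde U\to U$. The idea is to build $\tilde U$ so that: (i) it is orientable; (ii) the lifted polycycle $\tilde\Gamma$ has pairwise distinct singular points; and (iii) $\pi$ restricts to a covering of a neighborhood of $\Pi$ onto which the vector field $X_\lambda$ and its period annulus lift. Once this is done, every limit cycle of $X_\lambda$ accumulating on $\Gamma$ lifts to a limit cycle of $\pi^*X_\lambda$ accumulating on $\tilde\Gamma$ — or at worst, finitely many limit cycles downstairs correspond to one upstairs with bounded multiplicity — so finite cyclicity upstairs yields finite cyclicity downstairs.

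\textbf{Construction of the covering.} First I would handle orientability: if $U$ is non-orientable, pass to the orientation double cover $\pi_1:U_1\to U$. The pullback $\pi_1^*X_0$ is again a real analytic vector field, still Darbouxian in the sense of the paper (all singular points are lifts of the $a_i$, hence still orbitally analytically equivalent to linear saddles with positive ratio — linearizability is a local property preserved by local biholomorphisms), and $\Pi$ lifts to one or two copies of an open period annulus; it suffices to treat each. Next, to separate the singular points: the polycycle $\Gamma$ is a cyclic chain of orbits $\Gamma_1,\dots,\Gamma_n$ joining saddles, so topologically $\Gamma$ is a circle (an "$n$-gon"), and the first return map runs once around it. Take the connected $m$-fold cyclic covering of a tubular neighborhood of $\Gamma$ corresponding to the index-$m$ subgroup of $\pi_1\cong\Z$, where $m$ is chosen large enough (e.g. $m>n$, or any $m$ making all preimages of each $a_i$ distinct — concretely $m$ equal to the l.c.m. of the multiplicities with which points coincide). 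Upstairs the polycycle $\tilde\Gamma$ is an $mn$-gon whose vertices are now all distinct, and the period annulus $\Pi$ lifts to a genuine open period annulus $\tilde\Pi$ of $\pi^*X_0$ (the covering is compatible with the $S^1\times(0,\epsilon)$ structure: a closed orbit $\delta_t$ lifts to a single closed orbit winding $m$ times). The deformation $X_\lambda$ lifts to $\pi^*X_\lambda$, an analytic deformation of $\pi^*X_0$.

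\textbf{Transfer of cyclicity.} With the covering in hand, the final step is to show $Cycl(\Pi,X_\lambda)<\infty$ follows from $Cycl(\tilde\Pi,\pi^*X_\lambda)<\infty$. A limit cycle $\gamma$ of $X_\lambda$ inside a small neighborhood $V_\Gamma$ lifts, under the covering $\pi$, to either a closed curve or a finite union of arcs forming a closed curve after going around $m$ times; in the worst case $\gamma$ lifts to a single closed orbit $\tilde\gamma$ of $\pi^*X_\lambda$ winding $m$ times, and distinct $\gamma$'s give distinct $\tilde\gamma$'s. Hence the number of limit cycles of $X_\lambda$ near $\Gamma$ is at most the number of limit cycles of $\pi^*X_\lambda$ near $\tilde\Gamma$, which is finite by hypothesis. (Equivalently, in terms of displacement functions: if $\tilde P$ is the return map of $\pi^*X_\lambda$ along a lifted cross-section, then $\tilde P = P^{\circ m}$ for the return map $P$ of $X_\lambda$, so a zero of $P-\mathrm{id}$ is a zero of $\tilde P-\mathrm{id}$, and isolated zeros stay isolated.)

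\textbf{Main obstacle.} The delicate point is not the topology of the covering but checking that the lifted vector field still satisfies the hypotheses of Theorem~\ref{main2} and that $\tilde\Pi$ is genuinely an open \emph{relatively compact} period annulus with the required $S^1\times(0,1)$-structure — one must verify that the lift of the first integral has no critical points in $\tilde\Pi$ and that the Hausdorff limit of the lifted annuli is exactly $\tilde\Gamma$ with the claimed vertex-separated structure. A secondary subtlety is the orientability reduction interacting with the cyclic covering: one should perform the orientation cover first, then the cyclic cover, and confirm the composition is still a finite covering with connected total space over a neighborhood of $\Gamma$ (which holds since $\pi_1$ of an annular neighborhood of $\Gamma$ is abelian, so all relevant subgroups are normal and the covers compose cleanly). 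Given the local nature of linearizability and the fact that relative compactness is preserved under finite coverings of a compact-closure neighborhood, these verifications are routine but must be stated.
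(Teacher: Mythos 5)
Your first reduction (orientation double cover) matches the paper, and your transfer-of-cyclicity principle is the same one the paper invokes. The second reduction, however, has a genuine gap: a connected $m$-fold cyclic covering does \emph{not} separate coinciding vertices of the polycycle, no matter how large $m$ is. The reason is a counting obstruction. If $a_i$ occurs $k_i>1$ times in the list $\{a_1,\dots,a_n\}$, then the cycle $\delta_t$ passes near the point $a_i$ a total of $k_i$ times; in a genuine $m$-to-one covering of $U$ the point $a_i$ has exactly $m$ preimages, while the lifted cycle, winding $m$ times, passes near preimages of $a_i$ a total of $mk_i$ times. By pigeonhole some preimage is visited at least $k_i$ times, so the lifted polycycle again has a repeated vertex. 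You can see this concretely for a figure-eight at a single saddle ($n=2$, $a_1=a_2=a$): whatever $\phi:\pi_1(U)\to\Z/m$ you use, if $\delta_t$ lifts to one orbit winding $m$ times then the lifted $2m$-gon visits each of the $m$ preimages of $a$ exactly twice. Moreover when vertices coincide $\Gamma$ is a wedge of circles, not a Jordan curve, so $\pi_1$ of a neighborhood of $\Gamma$ is free of rank $>1$ and ``the'' cyclic $m$-fold cover is not even canonically defined; you would have to pick a homomorphism to $\Z/m$, and the natural one (winding number) is precisely the choice that fails.

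The paper sidesteps this by giving up on $\pi$ being a covering over all of $U$. It cuts $U$ into vertex neighborhoods $U_i$ and edge neighborhoods $V_i$, takes \emph{pairwise disjoint} copies $\tilde U_1,\dots,\tilde U_n$ (one copy per position in the list, even when $a_i=a_j$) and copies $\tilde V_i$, and reglues them cyclically. The resulting map $\pi:\tilde U\to U$ is a local biholomorphism that is $k_i$-to-one over $a_i$ and one-to-one elsewhere --- so not a covering in the usual sense --- but, crucially, it is a bijection on $\Pi$: each of the $k_i$ copies of $U_i$ covers exactly one of the $k_i$ ``corners'' of $\Pi$ near $a_i$. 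Because $\pi|_\Pi$ is bianalytic, the cyclicity transfers with no bookkeeping about multiplicities or winding, and the lifted polycycle is by construction an $n$-gon with $n$ distinct vertices. Your approach would need an extra idea --- e.g.\ choosing a non-cyclic finite cover adapted to the incidence pattern of the $a_i$, which can be done in examples but is not what you wrote and requires care --- whereas the paper's local-homeomorphism construction resolves it directly.
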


\begin{proof}
Assume that for some real analytic surface $\tilde{U}$ there is an analytic
mapping $\pi:\tilde{U}\to U$ which is a finite covering on $\Pi$. Then the
cyclicity of $\Pi$ for $X_\lambda$ is the same as cyclicity of $\pi^{-1}(\Pi)$
for the  lifting $X_\lambda$ to $\tilde{U}$. The claim of the Lemma follows
from this principle applied to two types of coverings below.

First, taking a double covering of $U$ as $\tilde{U}$, we can assume that $U$
is orientable.

Second, let $U$ be represented as a union of neighborhoods $U_i$ of $a_i$
together with neighborhoods $V_i$ of $\Gamma_i$. Glue $\tilde{U}$ as
$\tilde{U}=\tilde{U_1}\cup \tilde{V_1}\cup ...\cup \tilde{V_n}$, where
$\tilde{U_i}$ are bianalytically equivalent to $U_i$ and disjoint, and
$\tilde{V_i}$ are bianalytically equivalent to $V_i$, with natural glueing of
$\tilde{U_i}$ to $\tilde{V_i}$,   of $\tilde{V_i}$ to $\tilde{U_{i+1}}$ and of
of $\tilde{U_1}$ to $\tilde{V_n}$. In other words, $\pi:\tilde{U}\to U$ is
one-to-one away from $a_i$ and $k_i$-to-one in a neighborhood of $a_i$ if $a_i$
appears $k_i$ times in the list $\{a_1,....,a_n\}$. Evidently, $\pi$ is
one-to-one on $\Pi$, so is bianalytic.
\end{proof}

\begin{figure}
\centerline{\epsfysize=0.25\vsize\epsffile{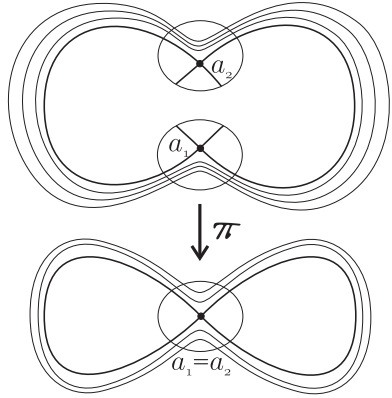}} \caption{Proof of
Lemma~\ref{lem:covering}.}\label{fig:covering}
\end{figure}

We will now define a first integral $H$ of $X_0$ in $U$. Take any non-singular
point $a\in\gamma_1$, and let $H$ be a local first integral of $X_0$ in a
neighborhood $U_a$ of $a$  such that $H(a)=0$ and $dH(a)\not =0$. Since $U$ is
orientable, $\Pi$ lies from one side of $\Gamma$, and we can assume that
intersection of $U_a$ with  each cycle $\delta_t$ is connected. This allows to
extend $H$ to a first integral of $X_0$ defined on $\Pi\cap U$. Changing sign
of $H$ if necessary, we can assume that $H>0$ on $\Pi\cap U_a$. We define
$H(\Gamma)=0$ by continuity.

\begin{lemma}\label{lem:H extended}
Extension of $H$ to $\Pi\cap U$ by flow of $X_0$ can be extended to a multivalued holomorphic
function defined in a neighborhood of $\Gamma$ in a complexification of $U$.
\end{lemma}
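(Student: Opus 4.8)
The plan is to extend $H$ from $\Pi\cap U$ across the polycycle $\Gamma$ by a covering/gluing argument along $\Gamma$, using the assumed local structure of $X_0$ near each $a_i$. Near a regular point of an orbit $\Gamma_i$, the vector field $X_0$ is locally trivializable, hence $H$ extends holomorphically (as a single-valued function) into a small neighborhood of $\Gamma_i$ in the complexification $U_\C$ of $U$; the only difficulty is concentrated at the singular points $a_i$. By the genericity assumption, near each $a_i$ the vector field $X_0$ is orbitally analytically equivalent to a linear saddle $\dot x=\lambda_i x,\ \dot y=-y$ with $\lambda_i>0$, so in suitable local analytic coordinates $X_0$ has a local first integral of Darboux type, namely $h_i = x^{\lambda_i} y$ (or $x\,y^{1/\lambda_i}$), which is a multivalued holomorphic function on the complement of $\{xy=0\}$ in a polydisc around $a_i$, single-valued on the real period annulus sector. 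First I would fix, for each $i$, such a local model and such a local Darboux integral $h_i$ near $a_i$.

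Next I would compare $H$ with the local models. On the overlap of a neighborhood of $a_i$ with a neighborhood of the incoming orbit $\Gamma_{i-1}$ and the outgoing orbit $\Gamma_i$, both $H$ (extended by the flow) and the local first integral $h_i$ are first integrals of $X_0$ on a connected piece of $\Pi$, so they are related by $H = \phi_i(h_i)$ for some analytic function $\phi_i$ of one variable defined on an interval $(0,\epsilon)$, with $\phi_i$ having a limit $0$ at $0$ (since $H(\Gamma)=0$). Here I would need that $\phi_i$ itself extends holomorphically (possibly multivaluedly) to a full punctured neighborhood of $0$ in $\C$; this is where I expect the main work to lie. The point is that $h_i$ is a proper submersion onto a neighborhood of $0$ away from $\Gamma$, its monodromy around $\Gamma$ is understood from the linear model (it is multiplication-type / finite order up to the irrational exponent), and the fibers of $h_i$ near $\Gamma$ are the complexified cycles $\delta_t$; expressing $H$ along these fibers shows $\phi_i$ is the ratio of two period-type functions and hence extends. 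Once $\phi_i$ extends, $H = \phi_i \circ h_i$ gives a multivalued holomorphic extension of $H$ to a punctured neighborhood of $a_i$ in $U_\C$.

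Finally I would glue: the extensions near the $\Gamma_i$ (single-valued) and the extensions near the $a_i$ (multivalued, of Darboux type) agree on overlaps by construction, because on each overlap they are both equal to the flow-extension of the original germ of $H$; so they patch to a single multivalued holomorphic function on a neighborhood of $\Gamma$ in $U_\C$. By Lemma \ref{lem:covering} I may assume $U$ is orientable and the $a_i$ are pairwise distinct, which makes the combinatorics of the gluing along the cycle $\Gamma$ straightforward (one goes once around, composing the local pieces, and checks the total monodromy is well defined). The main obstacle, as indicated, is the holomorphic extendability of the transition functions $\phi_i$ across $0$: it requires knowing that $H$, viewed as a function of the local Darboux integral $h_i$, does not pick up essential singularities or wild branching as one approaches $\Gamma$ — and this is exactly where the linearizability hypothesis on the $a_i$ is used, to pin down the allowed local form of $H$ near the saddle points.
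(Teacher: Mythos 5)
Your overall scheme matches the paper's: extend $H$ along the regular arcs $\Gamma_i$ by the flow, cross each saddle $a_i$ using the linearizing coordinates, and observe that the only nontrivial point is the analytic extendability of the transition function $\phi_i$ (the paper's $f$) relating $H$ to the local Darboux integral. You also correctly invoke Lemma~\ref{lem:covering} to assume orientability and distinct vertices, which is exactly how the paper arranges $\Gamma_{i}\ne\Gamma_{i+1}$ and $a_i\ne a_{i+1}$.

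However, at the one place where you yourself say ``the main work lies,'' the argument you sketch does not work. You propose that $\phi_i$ extends because it is ``the ratio of two period-type functions'' obtained by comparing $H$ and $h_i$ along complexified cycles $\delta_t$. Near a saddle $a_i$ the level sets $\{h_i = \mathrm{const}\}$ are local branches of hyperbolas, not cycles; there are no period integrals in play, and ``ratio of period-type functions'' does not define $\phi_i$ here. The actual reason $\phi_i$ extends is much more elementary and is what the paper uses: restrict everything to a transversal $\{x = x_0\}$, $x_0\ll 1$, crossing the incoming separatrix $\Gamma_{i-1}=\{y=0\}$. On that transversal, $h_i = yx^{\mu}$ becomes $y\,x_0^{\mu}$, a bona fide single-valued holomorphic coordinate (the branching of $x^{\mu}$ has been frozen at $x=x_0$), while $H$ is already analytic there because it has been extended by the flow along $\Gamma_{i-1}$ from the base germ with $dH\ne 0$. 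Hence $f=\phi_i$ is simply $H$ expressed in the coordinate $y\,x_0^{\mu}$: analytic and invertible with no further argument. Once $f$ is analytic, $H=f(yx^{\mu})$ gives the multivalued extension across $a_i$, and the paper then reconverts to a single-valued analytic first integral $\left(f^{-1}(H)\right)^{1/\mu}$ at the exit point $y=1$ of $\Gamma_i$, so the induction can continue around the polycycle. In short: the missing idea is the restriction to a transversal; without it you are left asserting, rather than proving, the analytic extendability of $\phi_i$, which is precisely the content of the lemma.

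A smaller remark: your final step speaks of ``checking that the total monodromy is well defined'' when one goes once around $\Gamma$. There is nothing to check: the lemma only asserts a multivalued holomorphic function, i.e.\ the analytic continuation of a fixed germ, and the induction produces exactly that. The orientability and distinctness of the $a_i$ are used earlier, to guarantee that the arcs and vertices encountered at consecutive stages are genuinely new so the step-by-step extension is well posed.
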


\begin{proof}
First, $H$ is analytic in some neighborhood of $\Gamma_1$, as it is an analytic
function extended by analytic flow of $X_0$. Choose local linearizing
coordinates $(x,y)$ near $a_2$ in such a way that $\Gamma_1=\{y=0\}$. By
assumption, $yx^{\mu}$ is the local first integral of $X_0$ near $a_2$.  Therefore
$H=f(yx^\mu )$, and, restricting to a transversal $x=x_0\ll 1$, one can see
that $f$ is analytic and invertible. Therefore $H$ can be extended to a
neighborhood of $a_2$.

Moreover, $\left(f^{-1}(H)\right)^{1/\mu}$ is an analytic local first integral
near the point  $y=1$ of $\Gamma_2$. Therefore it can be extended to a
neighborhood of $\Gamma_2$ (here we use that $U$ is orientable, so $\Gamma_2$
is different from $\Gamma_1$), and, as above, to a neighborhood of $a_3$ (here
we use that $a_2\not=a_3$), and so on.
\end{proof}

Note that from the above construction follows that near each $\Gamma_i$ the
first integral $H$ is equal, up to an invertible function,  to $x^{\lambda_i}$,
where $\{x=0\}$ is a local equation of $\gamma_i$. Also, near any singular
point of $\Gamma$ the first integral $H$ is equal, up to an invertible
function, to $x^\lambda y^\mu$.
\begin{corollary}\label{cor:dlogH is merom}
The one-form $\frac{dH}{H}$ is meromorphic one-form in $U$ with logarithmic
singularities only.
\end{corollary}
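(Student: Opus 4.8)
The plan is to use the local description of $H$ just established: near each orbit $\Gamma_i$ we have $H = U_i \cdot x^{\lambda_i}$ for an invertible holomorphic $U_i$ and a local defining equation $x=0$ of $\Gamma_i$, and near each singular point $a_i$ we have $H = V_i\cdot x^{\lambda}y^{\mu}$ for an invertible holomorphic $V_i$. From these it follows at once that the logarithmic derivative $dH/H$ is locally $\lambda_i\,\frac{dx}{x} + \frac{dU_i}{U_i}$, respectively $\lambda\,\frac{dx}{x}+\mu\,\frac{dy}{y}+\frac{dV_i}{V_i}$, and away from $\Gamma$ we have $dH/H = df/f$ with $f$ analytic and nonvanishing, hence holomorphic there.

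First I would record that $dH/H$ is a well-defined \emph{single-valued} meromorphic one-form on $U$, even though $H$ itself is multivalued: the monodromy of $H$ around each $\Gamma_i$ multiplies $H$ by a constant (a power of a root of unity coming from the $1/\mu$ extractions in Lemma~\ref{lem:H extended}), and such a constant factor disappears under logarithmic differentiation; so $dH/H$ extends across the branch locus. Next I would patch the local formulas above on the covering $U = \bigcup_i U_i \cup \bigcup_i V_i$ together with the open set where $X_0$ has no singularity and $H$ is invertible, to conclude that $dH/H$ is meromorphic on all of $U$ with polar locus contained in $\Gamma$. Finally, reading off the local normal forms, the only poles are along the branches $\{x=0\} = \Gamma_i$, each pole is of order exactly one, and the residue is the constant $\lambda_i$ (respectively $\lambda$ and $\mu$ at a singular point where two branches cross): that is precisely the assertion that the singularities are logarithmic.

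The only point requiring care — the ``main obstacle,'' though it is a mild one — is the global well-definedness across the branch locus of the multivalued $H$: one must check that the transition between the charts $U_i$ and $V_i$, in which $H$ was successively renormalised by taking $\mu$-th roots, changes $H$ only by multiplicative constants, so that the locally defined meromorphic forms $dH/H$ agree on overlaps and glue to a global meromorphic one-form. This is immediate from the construction in Lemma~\ref{lem:H extended}, where each extension step replaces $H$ by $(f^{-1}(H))^{1/\mu}$ or by $f(\,\cdot\,)$ with $f$ analytic and invertible, operations under which $d\log$ transforms by $d\log \mapsto \tfrac1\mu d\log$ or $d\log \mapsto d\log$ plus a holomorphic form; in particular no new poles are introduced and the pole along each $\Gamma_i$ stays simple. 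Hence $dH/H$ is a meromorphic one-form on $U$ whose singular set is $\Gamma$ and all of whose poles are simple with constant residues, i.e.\ it has only logarithmic singularities.
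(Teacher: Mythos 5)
Your proof is correct and takes the approach the paper itself intends: the Corollary is stated with no separate argument, as an immediate consequence of the note preceding it that near each $\Gamma_i$ one has $H=(\text{unit})\cdot x^{\lambda_i}$ and near each vertex $H=(\text{unit})\cdot x^{\lambda}y^{\mu}$, from which $d\log H$ is read off to have simple poles with constant residues along $\Gamma$. One small slip in your parenthetical: the monodromy factor picked up by $H$ around $\Gamma_i$ is $e^{2\pi i\lambda_i}$, which for irrational $\lambda_i$ (the generic Darbouxian case) is not a root of unity; but as you correctly observe, only the fact that it is a nonzero constant is used, so the argument is unaffected.
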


Assume that $n\ge 3$. One can easily construct a $C^\infty$ isomorphism of a
sufficiently small neighborhood   $U$ of $\Gamma$  with a  neighborhood of
a regular $n$-gone in $R^2$ in such a way that the image of $\Pi\cap U$ will lie
inside the $n$-gone and image of $\Gamma$ coincides with the $n$-gone.  Due to \cite{Grauert}, some neighborhood $U^{\C}$ of $U$
in its complexification is a Stein manifold. This implies that this isomorphism
can be chosen bianalytic. Similarly, for $n=2$ one can map bianalytically a
neighborhood of $U$ to a union of two arcs $\{x^2+(|y|-1)^2=2\}\subset\R^2$,
which, for the rest of the paper, will be called "regular 2-gone".

We transfer everything to plane using this isomorphism and will denote the
images on plane of the previously defined objects by the same letters. The
first integral $H$ takes the form $H=H_1\prod_{i=1}^n P_i^{\lambda_i}$, where
$P_i$ are analytic functions in $U$ with  $\{P_i=0\}=\Gamma_i$,  $H_1$ is an analytic functions non-vanishing in its
neighborhood $U$ and $\lambda_i>0$. Note that $H>0$ in the part of  $U$ lying
inside the $n$-gone. Further we assume that $H_1\equiv 1$, so $H=\prod P_i^{\lambda_i}$  (one can achieve this by e.g. taking
$P_1H_1^{1/\lambda_1}$ instead of $P_1$).

The family $X_\lambda$ becomes a family of planar analytic vector fields
defined in a neighborhood $U$ of a regular $n$-gone $\Gamma\subset\R^2$, and
$X_0$ has a first integral $H$ of Darboux type in $U$. Let $X_{\epsilon}=X_{\lambda(\epsilon)}$ be a one-parametric deformation of $X_0$ as in Proposition~\ref{ppm}. Define meromorphic forms $\omega^2$,
$\omega_{\epsilon}$ as
\begin{equation}
\omega^2(X_0,\cdot)=\frac{dH}{H},\qquad
\omega^2(X_{\epsilon},\cdot)=X_0+\omega_{\epsilon}.
\end{equation}
According to \cite[Theorem 2.1]{gav}, $M_k$ can be represented as a
linear combination of iterated integrals over $\{H=t\}$ of forms which are
combinations of Gauss-Manin derivatives of $\omega_\epsilon$.

Recall that the Gauss-Manin derivative of a form $\eta$ is defined as a form
$\eta'$ such that $d\eta=d(\log H)\wedge\eta' $. In general, $\eta'$ cannot be
uniquely defined from this equation, though its restrictions to $\{H=t\}$ are
defined unambiguously. However, since  $U^{\C}$ is Stein, in our situation one
can choose a meromorphic in $U$ representative of $\eta'$, with poles on
$\breve{\Gamma}$ only (where $\breve{\Gamma}$ is the union of lines containing
sides of $\Gamma$).

Therefore Theorem~\ref{main2} follows from the following claim
\begin{theorem}\label{main2'}
Let $H=\prod_{i=1}^n P_i^{\lambda_i}$ be as above, and let
$\gamma(t)\subset\{H=t\}$ be the connected component of its level set lying
inside $\Gamma$. Zeros of polynomials in iterated integrals
$I(t)=\int_{\gamma(t)}\omega_1...\omega_k$  corresponding to meromorphic
one-forms $\omega_1,...,\omega_k$ with poles in $\breve{\Gamma}$ cannot
accumulate to $0$.
\end{theorem}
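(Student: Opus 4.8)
\textbf{Proof proposal for Theorem~\ref{main2'}.}

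The plan is to study the asymptotics of the iterated integral $I(t)$ as $t\to 0^+$ via its Mellin transform, following \cite{novikov}. The geometric picture is that the cycle $\gamma(t)$ inside the $n$-gone $\Gamma$ degenerates, as $t\to 0$, onto the polycycle formed by the arcs $\Gamma_i$ and the corner saddle points $a_i$. Near each corner the first integral is $H=x^{\lambda}y^{\mu}$ in linearizing coordinates, so the time spent by $\gamma(t)$ near a corner, and hence the dominant contribution to path integrals along $\gamma(t)$, is governed by local integrals of the form $\int x^{a-1}(t/x^{\mu})^{?}\,\frac{dx}{x}$; these are exactly the integrals whose behavior as $t\to0$ is an asymptotic series in powers of $t^{1/\lambda_i}$, $t^{1/\mu_i}$ and $\log t$. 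I would first establish that $I(t)$ admits an asymptotic expansion of the form $I(t)\sim\sum_{\alpha,k} c_{\alpha,k}\, t^{\alpha}(\log t)^{k}$ as $t\to0^+$, where $\alpha$ ranges over a finitely-generated sub-semigroup of $\mathbb{Q}_{\ge0}$ determined by the exponents $\lambda_i$ (and their pairwise data at the corners), and $k$ is bounded in terms of $k$, the length of the iterated integral. The key structural input is that iterated integration raises the $\log$-degree by at most one per step and does not enlarge the set of admissible exponents $\alpha$ beyond the semigroup generated by the local ones.

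The second step is to make this rigorous through the Mellin transform $\mathcal{M}[I](s)=\int_0^{\delta} I(t)\,t^{s-1}\,dt$. One shows that $\mathcal{M}[I](s)$ extends meromorphically to the whole $s$-plane (or a half-plane $\Re s>-C$), with poles only at the points $s=-\alpha$ of the above semigroup, of order at most the corresponding $\log$-degree plus one, and with at most polynomial growth along vertical lines. This is where the bulk of the work lies: one decomposes $\gamma(t)$ into a piece near each corner $a_i$ and a piece along each side $\Gamma_i$, uses the linearizing coordinates and Corollary~\ref{cor:dlogH is merom} to reduce the corner contributions to explicit Mellin-type integrals (Beta-function-like, whose poles and growth are classical), and handles the iterated structure inductively on $k$: an iterated integral of length $k$ is an integral along $\gamma(t)$ of a $1$-form whose coefficient is an iterated integral of length $k-1$ over a sub-arc, and the Mellin transform of a product/convolution of such objects stays in the same class. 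The non-enlargement of the pole set under this recursion — i.e. that no new "resonant" exponents are created and that $\log$-powers do not blow up — is the main obstacle, and it is precisely the analogue of the quasi-unipotence statement used in the Hamiltonian case (Proposition~\ref{quasi}); here it must instead be extracted from the explicit Darboux form of $H$ and the genericity (linearizability) hypothesis, which guarantees the local first integrals are monomial with positive rational-independent-enough exponents.

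Given the meromorphic continuation with controlled poles and polynomial vertical growth, the final step is a standard Tauberian/Phragmén–Lindelöf argument: shifting the inversion contour $I(t)=\frac{1}{2\pi i}\int_{\Re s=c}\mathcal{M}[I](s)\,t^{-s}\,ds$ to the left past finitely many poles produces the genuine convergent-in-the-relevant-range asymptotic expansion $I(t)=\sum_{\alpha\le N} c_{\alpha,k}t^{\alpha}(\log t)^{k}+O(t^{N+\epsilon})$. A finite linear combination, and more generally a polynomial, of such functions $I_1(t),\dots,I_m(t)$ therefore either vanishes identically near $0$ or has a leading term $c\, t^{\alpha_0}(\log t)^{k_0}$ with $c\neq0$, which is nonzero for all sufficiently small $t>0$; hence its zeros cannot accumulate at $0$. (That a polynomial in the $I_j$ again has such an expansion follows because the admissible exponents form a semigroup closed under addition, so products of these asymptotic series are again of the same type.) The only point requiring care beyond bookkeeping is ruling out the degenerate possibility that the leading coefficient $c$ vanishes by an infinite cancellation — but finiteness of the set of exponents below any bound $N$, together with analyticity, reduces this to checking finitely many coefficients, so the dichotomy "identically zero near $0$ / isolated zeros near $0$" holds, completing the proof.
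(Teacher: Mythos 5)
Your overall strategy is the same as the paper's: decompose $\gamma(t)$ into pieces near the corners and sides of the polycycle, reduce to explicit Mellin integrals in linearizing coordinates, and use inverse Mellin contour shifting to produce an asymptotic expansion whose leading term controls zeros. However, there are two places where your sketch papers over a genuine difficulty that the paper handles with specific machinery, and where the naive version of what you write would fail.

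First, the contour shifting. You assert that shifting the inversion contour "to the left past finitely many poles" yields an expansion with error $O(t^{N+\epsilon})$ for arbitrary $N$. But for an elementary iterated integral of length $l$ near a saddle with exponents $\lambda_1,\lambda_2$, the poles of $\M I$ lie at $-\lambda_1^{-1}\sum m_i - \lambda_2^{-1}\sum n_i$, and there are $O(p^{l-1})$ of them in each unit interval $[-p-1,-p]$. You cannot shift the contour to a generic vertical line: you must choose special abscissas $-s_p$ whose distance to the nearest pole is bounded below by $O(s_p^{1-l})$, and then combine this with an explicit bound $|\M I(s)|\le C\rho(s)^{-l}$ on the Mellin transform in terms of that distance. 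Without this pair of estimates (Lemma~\ref{lem:upper bound elemint} and the pole-counting argument in the proof of Theorem~\ref{thm:elemint}), the contour integral over the shifted part is not controlled and the error bound $C s_p^{l^2}t^{s_p}$ that drives the quasianalyticity argument (Corollary~\ref{cor:quasian elemint}) is not available. Relatedly, the asymptotic series $\hat{I}$ is in general \emph{divergent} — the paper says so explicitly — so the phrase "convergent-in-the-relevant-range asymptotic expansion" is not quite what you can prove, and your subsequent remark about ruling out "infinite cancellation by checking finitely many coefficients" needs to be replaced by the Tauberian-style estimate just described.

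Second, the handling of the iterated structure. You propose to treat an iterated integral of length $k$ recursively as the integral along $\gamma(t)$ of a $1$-form whose coefficient is an iterated integral of length $k-1$; but that inner integral is a function of the moving endpoint on $\gamma(t)$, not of $t$, and its $t$-asymptotics are precisely what you are trying to establish, so this recursion does not close cleanly in the Mellin framework. The paper instead partitions the simplex of integration at once by the hyperplanes coming from the endpoints of the elementary pieces (Lemma~\ref{lem:mellin of iterated}, a Fubini argument), obtaining the iterated integral as a \emph{polynomial with integer coefficients} in elementary iterated integrals, each of which is a bona fide function of $t$. This is what allows one to reduce everything to the algebra generated by elementary iterated integrals. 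Then, to control Mellin transforms of \emph{products} of elementary iterated integrals, one needs the concrete statement that the Mellin transform of a product of inverse Mellin transforms of "generalized compensators" is a rational function with numerator depending polynomially on the exponent data and denominator with explicitly located roots (Lemma~\ref{lem:univpol}). Your appeal to "the admissible exponents form a semigroup closed under addition" captures only the location of the poles, not the growth estimate (Lemma~\ref{lem:upper bound monomial}) that the quasianalyticity conclusion requires. These are the substantial technical steps and they are not just bookkeeping; your sketch identifies the right milestones but does not supply arguments that would survive the divergence of the series or the growth of the pole count.
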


From the above discussion it is clear that Theorem~\ref{main1} follows on its
turn from the following
\begin{theorem}\label{main1'}
Let
$$X_0= H_y
\frac{\partial}{\partial x} - H_x \frac{\partial}{\partial y}$$ where $H$ is a
real analytic function with isolated singularities in some complex neighborhood
of the closed period annulus $\bar{\Pi}= \{ \gamma(t): 0\leq t\leq 1\}$, where
$\gamma(t)\subset\{H=t\}$ is the connected component of the level set of $H$
lying inside $\Gamma$. Zeros of the first non-vanishing Poincar\'{e}-Pontryagin
function $M_k$, corresponding to a one-parameter analytic deformation
$X_\varepsilon$ of $X_0$ cannot accumulate to $0$.
\end{theorem}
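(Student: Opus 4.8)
The plan is to reduce Theorem~\ref{main1'} to the quasi-unipotence of the monodromy of a Fuchsian system satisfied by the components of $M_k$, following the scheme announced in the introduction: Proposition~\ref{quasi} $\Rightarrow$ Proposition~\ref{prop3} $\Rightarrow$ Proposition~\ref{th4}, and then $\{$Theorem~\ref{th2} + Proposition~\ref{th4}$\}$ $\Rightarrow$ Theorem~\ref{th3}. The starting point is the integral representation of $M_k$ from \cite{gav} as a $\Z$-linear (or polynomial) combination of iterated integrals of length at most $k$ over the ovals $\gamma(t)\subset\{H=t\}$. First I would show that every such iterated integral, as a multivalued function of the complex parameter $t$, is a component of a solution vector of a Fuchsian (regular singular) linear ODE with singularities only over the critical values of $H$ together with $0$ and $\infty$; this is essentially the Gauss--Manin connection on the cohomology of the fibers of $H$, and it is where Theorem~\ref{th2} enters. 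The iterated integrals of length $k$ live in a successive extension of the local systems of length $\le k-1$ by the cohomology local system, so the whole construction stays inside the category of regular singular connections, and its monodromy is built from that of the Picard--Lefschetz local system.

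The heart of the argument is to prove that the monodromy of this connection around $t=0$ (the vanishing of $M_k$ we care about is at $t=0$) is quasi-unipotent, i.e. all its eigenvalues are roots of unity. For a single oval degenerating onto the polycycle $\Gamma$ as $t\to 0$, the relevant monodromy is the monodromy of the fibration $H=t$ near the singular fiber $H=0$, and by the monodromy theorem for the Milnor fibration — or, in this relative-compact setting, by resolution of the singular fiber and the Landman--Grothendieck quasi-unipotence theorem — this monodromy is quasi-unipotent. The iterated-integral local system is an iterated extension of copies of this one, and quasi-unipotence is stable under extensions, so Proposition~\ref{quasi} should deliver quasi-unipotence of the full monodromy representation attached to $M_k$. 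I would then invoke the classical fact (Proposition~\ref{prop3} $\Rightarrow$ Proposition~\ref{th4}) that a component of a solution of a Fuchsian system with quasi-unipotent local monodromy at $t=0$ admits, near $t=0$, a convergent expansion of the form $\sum_{j,\ell} c_{j\ell}\, t^{j/N}(\log t)^\ell$ with finitely many exponents and bounded powers of $\log t$ — a so-called "generalized power series" or Nilsson-class function — and such a function, if not identically zero, has finitely many zeros accumulating to $0$ (none, after shrinking). Combined with the real-analyticity of $M_k$ on $(0,1)$ this gives non-accumulation at $0$, which is exactly Theorem~\ref{th3}, hence Theorem~\ref{main1'}.

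I expect the main obstacle to be the quasi-unipotence of the monodromy of the iterated integrals, i.e. Proposition~\ref{quasi}, rather than the passage from quasi-unipotence to non-oscillation, which is formal. Two points need care. First, one must verify that the connection produced from the iterated integrals really is regular singular at $t=0$ — the Gauss--Manin connection is, but the construction involves choosing meromorphic representatives $\eta'$ of Gauss--Manin primitives (as the excerpt notes, $U^\C$ being Stein lets us pick such representatives with poles on a fixed divisor), and one must check these choices do not introduce irregular singularities; this is handled by noting that the poles stay on the fixed analytic set and the residues along $\{H=t\}$ are controlled by the original Fuchsian structure. Second, one must handle the fact that $H$ may have non-isolated critical points on the singular fiber only in the reducible/Darbouxian setting — but here we are in the Hamiltonian case with isolated singularities, so the local monodromy at each critical point is quasi-unipotent by the Milnor-fibration monodromy theorem, and the global monodromy at $t=0$, being a product/composition of these local pieces along the vanishing cycles that build $\gamma(t)$, remains quasi-unipotent. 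Assembling these: the singular fiber $H^{-1}(0)$ contains the polycycle $\Gamma$; resolving $H$ so that $H^{-1}(0)$ becomes a normal crossings divisor, the monodromy of $\gamma(t)$ around $t=0$ is a product of commuting quasi-unipotent operators coming from the multiplicities of the exceptional components, whence quasi-unipotence of the cohomological monodromy, and then of every iterated extension.

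The one genuinely delicate step worth flagging is proving that the finite-dimensional $\Q$-vector space spanned (over the base near $t=0$) by the length-$\le k$ iterated integrals is monodromy-invariant and carries a regular singular connection: this requires Theorem~2.1 of \cite{gav} together with the closure of the Nilsson class under iterated integration with regular-singular integrands. Granting that (it is Theorem~\ref{th2} in the author's numbering), the rest of the proof of Theorem~\ref{main1'} is the chain Proposition~\ref{quasi} $\Rightarrow$ Proposition~\ref{prop3} $\Rightarrow$ Proposition~\ref{th4}, then Theorem~\ref{th3}, then Theorem~\ref{main1'} as laid out in the introduction, with no further surprises.
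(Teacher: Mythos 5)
Your proposal reconstructs the paper's scheme correctly: Theorem~\ref{th2} from \cite{gav} supplies the Fuchsian equation for $M_k$ near $t=0$, Propositions~\ref{quasi}, \ref{prop3}, \ref{th4} establish quasi-unipotence of its monodromy (Theorem~\ref{th3}), and the resulting Nilsson-class expansion $M_k(u)=\sum_{i,j} u^{\mu_i}(\log u)^j f_{ij}(u)$ with $\mu_i\in\Q$ and $f_{ij}$ analytic near $0$ forbids accumulation of zeros. The one imprecision is describing the iterated-integral local system as an iterated extension of \emph{copies} of the cohomology local system and appealing only to stability of quasi-unipotence under extensions: the paper works with the free group $F=\pi_1(\Gamma_t,*)$, the normal subgroup $G$ generated by the $\pi_1(D\setminus\{0\})$-orbit of $\gamma_t$, the lower central series and its truncation $\tilde{G}/(\tilde{G},\tilde{F})$, and identifies $\operatorname{gr}F$ with a free Lie algebra so that the graded pieces are subquotients of tensor powers of $H_1(\Gamma_t)$ rather than mere copies, and Proposition~\ref{prop3} is needed to transfer the annihilating polynomial from $\operatorname{gr}^i F$ to $\tilde{G}/(\tilde{G},\tilde{F})$ via the commutative diagram~(\ref{diagram}); since quasi-unipotence is also stable under tensor powers and subquotients this does not change the conclusion, but it is the step your heuristic compresses.
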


\section{Non-oscillation  in the Hamiltonian case}
\label{proof1}
Here shall prove Theorem \ref{main1'}. This follows from the following two results
\begin{theorem}[\cite{gav}]
\label{th2}
The Poincar\'{e}-Pontryagin function $M_k$ satisfies a linear differential equation of a Fuchs
type in a suitable complex neighborhood of $0 \in \C$.
\end{theorem}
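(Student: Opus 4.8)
The plan is to deduce Theorem~\ref{th2} from two facts: first, the finite dimensionality of a space of iterated integrals stable under the Gauss--Manin derivative $\partial_t$, which already forces $M_k$ to satisfy \emph{some} linear ODE with meromorphic coefficients near $0$; second, the moderate growth of iterated integrals over $\gamma(t)$ as $\gamma(t)$ degenerates onto the polycycle $\Gamma=\{H=0\}$, which upgrades "linear ODE" to "Fuchsian ODE" at $t=0$.

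For the first part I would recall, as in Section~\ref{plane} via \cite{gav}, that $M_k$ is a finite $\C$-linear combination of iterated integrals $I_j(t)=\int_{\gamma(t)}\eta_1\cdots\eta_j$, $1\le j\le k$, where the $\eta_i$ are meromorphic $1$-forms near $\Gamma$ with poles only along $\breve\Gamma$, built from $\omega_\epsilon$ and its Gauss--Manin derivatives. Let $W$ be a finite-dimensional $\C$-space of such forms containing all the $\eta_i$ and closed, modulo exact forms and modulo multiplication by germs of meromorphic functions of $t$, under $\eta\mapsto\eta'$ (where $d\eta=\frac{dH}{H}\wedge\eta'$); such a $W$ exists because the relative de Rham cohomology of the fibres of a function $H$ with isolated critical points is finitely generated (Brieskorn, Sebastiani) and its Gauss--Manin connection is meromorphic in $t$. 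Then the module $\mathcal V$ over $\mathcal O_0$ (germs of meromorphic functions at $0$) spanned by all iterated integrals of forms from $W$ of length $\le k$ is finitely generated — the truncation at length $k$ of Chen's bar construction is finite over the base in each degree. Differentiating $I_j(t)$ under the integral sign, using $d\eta_i=\frac{dH}{H}\wedge\eta_i'$ and the Leibniz rule for iterated integrals, again yields iterated integrals from $W$ of length $\le k$; hence $\partial_t\mathcal V\subset\mathcal V$. Consequently $M_k$ together with its derivatives spans a finite-dimensional $\mathcal O_0$-space, so (clearing denominators) there is a linear operator $L$ with coefficients meromorphic at $0$ with $LM_k=0$; take $L$ minimal, so its solution space is the $\C$-span of the monodromy branches of $M_k$.

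For the second part I would show that $0$ is a regular singular point of $L$, via the classical criterion that a linear ODE is Fuchsian at $0$ iff all its solutions have moderate growth in every sector $\{0<|t|<\rho,\ |\arg t|\le C\}$. Every solution of $L$ is a $\C$-combination of monodromy branches of $M_k$, hence again an iterated integral over a cycle of $\{H=t\}$ of forms with poles only on $\breve\Gamma$, so it suffices to bound such integrals. As $t\to0$ the cycle $\gamma(t)$ tends to $\Gamma$; near each of the finitely many critical points of $H$ on $\Gamma$ the function $H$ is, after a local biholomorphism, $x^2+y^2$ or $xy$ (the isolated, Morse, case; in the worst non-Morse case a quasi-homogeneous normal form), so the portion of $\gamma(t)$ there contributes at most a factor $\log|t|^{-1}$ to the integrals, while the poles of the $\eta_i$ along $\breve\Gamma$ contribute at most a fixed power $|t|^{-N}$. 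Iterating $k$ times and concatenating over the finitely many sides of the $n$-gon yields $|I_j(t)|\le\const\,|t|^{-Nk}(\log|t|^{-1})^{k}$, which is moderate; hence $L$ is Fuchsian at $0$, proving Theorem~\ref{th2}.

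The hard part is the moderate-growth estimate in the degenerate setting: $\Gamma$ is a polycycle through several singular points that need not be Morse, the deformation forms genuinely have poles on $\breve\Gamma$, and one must control an iterated integral along a path that repeatedly enters and leaves neighbourhoods of these points, checking that the logarithmic time-growth near each saddle combines with the pole contributions into a moderate bound that remains moderate under taking iterated integrals of length up to $k$. The finite-generation input (finiteness of the Brieskorn module, truncation of the bar complex) and the differentiation-under-the-integral step are, by comparison, more standard once the geometry near $\Gamma$ is set up.
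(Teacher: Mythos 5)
The paper does not give its own proof of Theorem~\ref{th2}; the result is imported from \cite{gav}, where the Fuchsian equation is obtained from the finite-dimensional monodromy representation (see (\ref{monodromy2}) and the surrounding discussion): after truncation at length $k$ the representation is finite-dimensional, so $M_k$ and its monodromy branches span a finite-dimensional $\C$-space, which together with a moderate-growth estimate gives a regular-singular equation. Your route replaces the topological finiteness input (a finite-dimensional monodromy representation) by an algebraic one --- a $\partial_t$-stable, finitely generated $\mathcal{O}_0$-module of iterated integrals built from the Brieskorn module of $f$ and a truncated bar complex --- and keeps the moderate-growth step. That is a genuinely different way to establish the linear ODE, with the same two-step skeleton ``finiteness $\Rightarrow$ ODE, moderate growth $\Rightarrow$ Fuchsian.''

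Two problems, one minor and one substantial. First, a normalization slip: Theorem~\ref{th2} is the Hamiltonian case, where the Gauss--Manin relation is $d\eta = df\wedge\eta'$, not $d(\log H)\wedge\eta'$ as in the Darbouxian reduction of Section~\ref{plane}, and the forms $\omega_i$ have poles on a divisor disjoint from $\Pi$, not on $\breve\Gamma$. Second, and more seriously, your moderate-growth estimate is in substance a Darbouxian argument: you treat each critical point of $f$ on $\Gamma$ as a Morse saddle $xy$ contributing a single $\log$, with a fixed pole order coming from $\breve\Gamma$, and then you concatenate over the sides of an $n$-gon. But in the Hamiltonian case $f$ has arbitrary isolated singularities, $\Gamma=\{f=0\}$ need not resemble a polycycle of saddles at all, and the ``$x^2+y^2$, $xy$, or quasi-homogeneous'' trichotomy is not exhaustive for isolated singularities. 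The bound $|I_j(t)|\le \const |t|^{-Nk}(\log|t|^{-1})^k$ therefore does not follow from the local analysis you sketch. The right tool here is the regularity theorem for the Gauss--Manin connection of a Milnor fibration (Griffiths, Deligne), which yields quasi-unipotent monodromy --- what the paper itself re-proves as Proposition~\ref{quasi} --- together with the sector bounds you need, with no recourse to a local normal form; for iterated integrals of length $\le k$ one then argues inductively. As written, the proposal has a gap precisely at the step you yourself flag as the hard one.
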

\begin{theorem} \label{th3}
The monodromy operator of the above Fuchs equation corresponding to a loop encircling the origin in $\C$ is
quasi-unipotent.
\end{theorem}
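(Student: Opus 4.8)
The plan is to prove that the monodromy of the Fuchs equation satisfied by $M_k$ is quasi-unipotent by realizing $M_k$ as a period of a suitable mixed Hodge structure, or more elementarily, by tracking the monodromy of the iterated integrals that represent it.

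\textbf{Step 1: Reduction to iterated integrals.} By the integral representation in \cite{gav}, $M_k(t)$ is a linear combination of iterated path integrals $\int_{\gamma(t)}\omega_1\cdots\omega_j$, $j\le k$, where $\gamma(t)\subset\{H=t\}$ is the vanishing cycle of the Hamiltonian $H$ and the $\omega_i$ are algebraic (rational, with poles on $\breve\Gamma$) one-forms. Thus it suffices to analyze the monodromy, as $t$ turns around $0$ in $\C$, of the full finite-dimensional local system spanned by all iterated integrals of length $\le k$ over $\gamma(t)$. The function $M_k$ lies in this local system, and the Fuchs equation of Theorem~\ref{th2} is (a quotient of) the associated Picard--Fuchs-type connection; quasi-unipotence of the monodromy of that local system forces quasi-unipotence of the monodromy operator of any subquotient, in particular of the minimal-order Fuchs operator annihilating $M_k$.

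\textbf{Step 2: Monodromy of the fibration and of iterated integrals.} The key geometric input is that, near the polycycle $\Gamma$, the Hamiltonian $H$ defines a holomorphic fibration whose fiber $\{H=t\}$ degenerates at $t=0$ to $\Gamma$, and this degeneration is (after the usual desingularization/normal-crossings reduction, since $H$ has isolated singularities in a complex neighborhood) of the type covered by the monodromy theorem for degenerations of algebraic varieties: the monodromy acting on $H^1$ of the fiber — equivalently on the homology class of $\gamma(t)$ and its companions — is quasi-unipotent. Iterated integrals are governed by Chen's theory: the relevant object is the monodromy action on the pro-unipotent completion of $\pi_1$ of the fiber, filtered by length. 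Since monodromy respects the length filtration and acts on each graded piece through tensor powers of the (quasi-unipotent) action on $H^1$, and since the eigenvalues of a tensor power of a quasi-unipotent operator are again roots of unity, the action on iterated integrals of bounded length is quasi-unipotent. This is where one invokes that the whole structure is a variation of mixed Hodge structure whose weight-graded pieces are pure and have quasi-unipotent monodromy — the eigenvalues of monodromy are roots of unity because they are eigenvalues on a polarizable pure Hodge structure degenerating at $t=0$.

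\textbf{Step 3: Handling the boundary forms.} A technical point is that the forms $\omega_i$ have poles along $\breve\Gamma$, the lines through the sides of the $n$-gon, i.e. along components of the singular fiber itself; so one is really dealing with iterated integrals on the open curve $\{H=t\}\setminus\{\text{poles}\}$, hence with the relative/mixed situation $H^1$ of an affine curve. The remedy is to pass to the relative cohomology (or to a log complex) and again appeal to the quasi-unipotence of monodromy for degenerating families of quasi-projective varieties: the local monodromy at $t=0$ on $H^1_{\mathrm{dR}}$ of the punctured fibers is still quasi-unipotent, and the same tensor-power argument on the length-graded pieces of Chen's bar complex applies. Concretely, one can localize near each vertex $a_i$ of the polycycle where $H$ looks like (a unit times) a monomial $x^\lambda y^\mu$ (Hamiltonian case: isolated critical point), compute the local monodromy explicitly there, and glue.

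\textbf{Main obstacle.} The real work is Step 2–3: making rigorous that the monodromy on iterated integrals of length $\le k$ is quasi-unipotent, i.e. producing the correct cohomological framework (a variation of mixed Hodge structure on the bar construction of the family $\{H=t\}$, or equivalently controlling $\pi_1$ of the fibers uniformly near the degeneration) and then invoking the monodromy theorem. The length filtration and the multiplicativity of eigenvalues under tensor product are the clean part; the subtlety is that the forms are only meromorphic with poles on the degenerate fiber, so one must check the family of punctured curves still degenerates with unipotent-up-to-roots-of-unity monodromy, which ultimately reduces to the local analysis at the singular points and the sides of $\Gamma$, where the geometry is explicit.
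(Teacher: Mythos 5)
Your proposal follows the same fundamental route as the paper — reduce quasi-unipotence of the monodromy of $M_k$ to quasi-unipotence of the monodromy acting on $H_1$ of the punctured fiber, then propagate this through the length filtration on iterated integrals — but you implement the two key steps with heavier machinery. For the propagation step you invoke Chen's pro-unipotent $\pi_1$ together with the fact that eigenvalues of tensor powers of a quasi-unipotent operator are roots of unity, and you anchor this in the monodromy theorem for variations of (mixed) Hodge structure. The paper instead works directly with the lower central series $F_i$ of the free group $F=\pi_1(\Gamma_t,*)$ and the free Lie algebra $gr\,F$: Proposition~\ref{quasi} proves quasi-unipotence on each $gr^k F$ by an elementary induction using the identity $Var_*[x,y]=[Var_*x,Var_*y]+[Var_*x,y]+[x,Var_*y]$, and Proposition~\ref{prop3} transfers this to the finitely generated nilpotent quotient $\tilde G/(\tilde G,\tilde F)$ via a commutative-diagram argument, yielding Proposition~\ref{th4}. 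For your Step~3 (the poles on $\breve\Gamma$), you propose passing to relative or log cohomology of a degenerating family of quasi-projective curves; the paper disposes of this point much more cheaply: the monodromy merely permutes the finite set of poles on a fiber, so a power of it fixes them, and quasi-unipotence of the action on $H_1$ of the punctured fiber then follows from the classical monodromy theorem for isolated critical points of an analytic function (Katz). Your version is conceptually clean and buys generality, but is overkill here and in fact would require some extra care to justify that the VMHS machinery applies to a germ of an analytic (rather than algebraic or quasi-projective) fibration; the paper's group-theoretic argument is elementary and self-contained and sidesteps that issue entirely. You correctly identify the main subtlety — that one must control the monodromy on the full length-filtered object, not just on $H_1$ — which is precisely what Propositions~\ref{quasi}, \ref{prop3} and \ref{th4} accomplish.
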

Let us recall that an endomorphism is called unipotent, if all its eigenvalues are equal to $1$, and
quasi-unipotent if  all of them are roots of the unity. The above theorems imply that the
Poincar\'{e}-Pontryagin-Melnikov function has a representation in a neighborhood of $u=0$
$$
M_k(u)= \sum_{i=0}^N\sum_{j=0}^N u^{\mu_i}(log (u))^j f_{ij}(u)
$$
where $N\in \N$, $\mu_j \in \Q$, and $f_{ij}$ are functions analytic in a neighborhood of $u=0$.  This shows
that the zeros of $M_k|_{(0,1)}$ do not accumulate to $0$. Of course, similar arguments hold in a neighborhood
of $u=1$, so $M_k$ has a finite number of zeros on $(0,1)$. This completes the proof of Theorem \ref{main2} in
the Hamiltonian case. To the end of the section we prove Theorem \ref{th3}. The open real surface $S$ is
analytic and hence possesses a canonical complexification. Similarly, any analytic family of analytic vector
fields $X_\lambda$ is extended to a complex family of vector fields, depending on a complex parameter. In this
section,  by abuse of notation, the base field will be $\C$. A real object and its complexification will be
denoted by the same letter.

Let $U\supset \bar{\Pi}$ be an open complex neighborhood of $ \bar{\Pi}$  in which the complexified vector field
$X_0$ has an analytic first integral $f$ with isolated critical points. The restriction of $f$ on the interval
$(0,1)$ (after identifying $\Pi$ to $S^1\times (0,1)$) is a local variable with finite limits at $0$ and $1$
Therefore we may suppose that $f(0)=0$, $f(1)=1$, and the restriction of $f$ to $(0,1)$ is the canonical local
variable on $(0,1)\subset \R$. The function $f$ defines a locally trivial Milnor fibration in a neighborhood of
every isolated critical point. There exists a complex neighborhood $U$ of $ \bar{\Pi}$ in which $F$ has only
isolated critical points. Moreover the compactness of $ \bar{\Pi}$ implies that there exists a complex
neighborhood $D\subset \C$ of the origin, homeomorphic to a disc, such that the fibration
\begin{equation}\label{milnor}
U\cap \{f^{-1}(D\setminus\{0\} ) \} \stackrel{f}{\rightarrow} D\setminus\{0\}
\end{equation}
is locally trivial, and the fibers $ f^{-1}(t)\cap U$ are open Riemann surfaces homotopy equivalent to a bouquet of a finite number of circles.
 Consider a one-parameter analytic deformation $X_\varepsilon$
of the vector field $X_0$. As $f$ is a first integral of $X_0$, then there exists an unique symplectic two-form
$\omega^2$, such that
$$
\omega^2(X_0,.) = df .
$$
Indeed, if in local coordinates
$$ X_0 = a\,\frac{\partial}{\partial x}+
b\,\frac{\partial}{\partial y}
$$
then $X_0.df=0$ implies $(a,b)=\lambda (f_y,-f_x)$, where $\lambda$ is analytic in $U$ and non-vanishing in
$\Pi$. It follows that
$$
\omega^2= \frac{dx \wedge dy}{\lambda} .
$$

Define a unique meromorphic one-form $\omega_\varepsilon$ by the formula
$$
\omega^2(X_\varepsilon,.) = df + \omega_\varepsilon .
$$
The one form $\omega_\varepsilon $ is meromorphic in $U$, depends analytically on $\varepsilon $, and
$\omega_0=0$. Its pole divisor does not depend on $\varepsilon $ as in the local variables above it is defined
by $\lambda=0$. Therefore $\omega_\varepsilon = \sum_{i\geq 1} \varepsilon^i \omega_i$ where $\omega_i$ are
given meromorphic one-forms in $U$ with a common pole divisor which does not intersect the period annulus $\Pi$.
In the complement of the singular locus of $X_\varepsilon$ the vector field $X_\varepsilon$ and the one form $df
+\omega_\varepsilon$ define the same foliation, and therefore define the same first return map associated to
$\Pi$. Denote this map by $P(t,\varepsilon)$, where $t\in(0,1)$ is the restriction of $f$ to a cross-section of
the period annulus $\Pi$ (this does not depend on the choice of the cross-section). We have
$$
P(t,\varepsilon)= t  + \sum_{k\geq 1} \varepsilon^k M_k(t) .
$$
On each leaf of the foliation defined by $X_\varepsilon$ we have $df =-\omega_\varepsilon$ which implies
$$
M_1(t) = \int_{\gamma_t} \omega_1
$$
where $\{\gamma_t\}_t$ is the family of periodic orbits (with appropriate orientation)  of $X_0$,
$\Pi=\cup_{t\in(0,1)} \gamma_t$, \cite{pontryagin}. Thus the first Poincar\'{e}-Pontryagin-Melnikov function is an
Abelian integral and its monodromy representation is straightforward. Namely, the meromorphic one-form
$\omega_1$ restricts to a meromorphic one-form on the fibers of the Minlor fibration (\ref{milnor}). We may also
suppose that $\omega_1|_{f^{-1}(t)}$ has a finite number of poles $\{P_i(t)\}_i$ (after choosing appropriately
the domain $U$). Denote
$$\Gamma_t= U\cap \{f^{-1}(t)\setminus \{P_i(t)\}_i\}$$

The Milnor fibration (\ref{milnor}) induces a representation
\begin{equation}\label{hmilnor}
\Z=\pi_1(D\setminus\{0\},*)\rightarrow Aut(H_1(\Gamma_t,\Z))
\end{equation}
which implies the monodromy representation of $M_1$. Suppose first that $\omega_1$ is analytic in $U$. It is
well known that the operator of the classical monodromy of an isolated critical point of an analytic function is
quasi-unipotent, e.g. \cite{katz}. Therefore the representation in $Aut(H_1(U\cap \{f^{-1}(t)\} ,\Z)) $ of a
small loop about $0$  in $\pi_1(D\setminus\{0\},*)$ is quasi-unipotent. More generally, let $\omega_1$
be meromorphic one-form with a finite number of poles on the fibers $U\cap \{f^{-1}(t)\}$. A monodromy operator
permutes the poles and hence an appropriate power of it leaves the poles fixed. Therefore this operator is
quasi-unipotent too and Theorem \ref{th2} is proved in the case $M_1\neq 0$. Of course, it is well known that an
Abelian integral has a finite number of zeros \cite{kho,var}.

Let $M_k$ be the first non-zero Poincar\'{e}-Pontryagin-Melnikov function. Its "universal" monodromy representation
was constructed in \cite{gav1}. For convenience of the reader we reproduce  it here. Recall first that $M_k(t)$
depends on the \textit{free} homotopy class of of the loop $\gamma_t$ in $\pi_1(\Gamma_t)$ \cite[Proposition
1]{gav1} and that this property does not hold true for the first return map $P(t,\varepsilon)$ (which depends on the homotopy class of $\gamma_t$ in $\pi_1(\Gamma_t,*)$) . Let
$F=\pi_1(\Gamma_t,*)$ be the fundamental group of $\Gamma_t$. It is a  finitely generated  free group. Let $\mathcal{O}\subset \pi_1(\Gamma_t)$ be the orbit of the loop $\gamma_t$ under the action of
$\Z^2=\pi_1(D\setminus\{0\},*)$ induced by (\ref{milnor}). The set $\mathcal{O}$ generates a normal subgroup
of $F$ which we denote by $G$. The commutator subgroup $(G,F)\subset F$ is the normal sub-group of $F$ generated
by commutators $(g,f)=g^{-1}f^{-1} g f$. The Milnor fibration (\ref{milnor}) induces a representation
\begin{equation}\label{fmilnor}
\Z=\pi_1(D\setminus\{0\},*)\rightarrow Aut(G/(G,F)) .
\end{equation}
According to \cite[Theorem 1]{gav1}, the monodromy representation of $M_k$ is a sub-representation of the
monodromy representation dual to (\ref{fmilnor}). Unfortunately the free Abelian group $F/(G,F)$ is not
necessarily of finite dimension. To obtain a finite-dimensional representation we use the fundamental fact that
$M_k$ has an integral representation as an iterated path integral of length $k$ \cite[Theorem 2.1]{gav}.

To use this, define by induction $F_{i+1}=(F_i,F)$, $F_1=F$. We will later consider the associated graded group
\begin{equation}\label{graded}
gr F = \bigoplus_{i=1}^\infty gr ^i F, \,\, gr ^i F=  F_i/F_{i+1} .
\end{equation}
 It is well know that an iterated integral
of length $k$ along a loop contained in $F_{k+1}$ vanishes identically. Therefore, to study the monodromy representation of $M_k$, we shall  truncate with respect to
$F_{k+1}$ and obtain a finite-dimensional representation. Namely, for every subgroup $H\subset F$ we denote
$$\tilde{H}= (H \cup F_{k+1}) / F_{k+1} .$$ The representation (\ref{fmilnor}) induces a homomorphism
\begin{equation}
\label{monodromy2} \pi_1(\C\setminus D , *) \rightarrow Aut(\tilde{G}/(\tilde{G},\tilde{F}))
\end{equation}
and the monodromy representation of $M_k$ is a sub-representation of the representation dual to
(\ref{monodromy2}) \cite{gav}. The Abelian group $\tilde{G}/(\tilde{G},\tilde{F})$ is, however, finitely
generated. Indeed the lower central series of $\tilde{F}= \tilde{F_1}$ is
$$ \tilde{F_1} \supseteq \tilde{F}_2  \supseteq \dots \tilde{F_k} \supseteq \{id\}
$$
and hence $\tilde{F}$ is a finitely generated nilpotent group. Each sub-group of such a group is finitely
generated too, e.g. \cite{hall}.

The central result of this section is the following theorem,  from which
 Theorem \ref{th3} follows immediately
\begin{proposition}
\label{th4} The monodromy representation
(\ref{monodromy2}) is quasi-unipotent.
\end{proposition}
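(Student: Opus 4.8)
The plan is to reduce everything to the classical monodromy of the Milnor fibration of an isolated singularity and then to exploit the naturality of the lower central series under that monodromy action. First I would fix a small loop $\sigma$ around $0$ in $D\setminus\{0\}$ and let $\rho\in \operatorname{Aut}(H_1(\Gamma_t,\Z))$ be the induced automorphism on the first homology of the fiber $\Gamma_t=U\cap\{f^{-1}(t)\setminus\{P_i(t)\}\}$. Since $f$ has only isolated critical points in $U$ and the fiber is an open Riemann surface homotopy equivalent to a bouquet of circles, the monodromy of $H_1$ of the \emph{closed} Milnor fiber is quasi-unipotent by the monodromy theorem (Landman, Grothendieck, cf. \cite{katz}). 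Passing to $\Gamma_t$ only adds the punctures $P_i(t)$, which a power of $\sigma$ fixes individually (it permutes them); so a suitable power $\rho^m$ acts unipotently on $H_1(\Gamma_t,\Z)$. Equivalently, $\rho^m$ acts unipotently on the abelianization $F/F_2$ of $F=\pi_1(\Gamma_t,*)$, where $F_i$ is the lower central series.

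The key point is then a purely group-theoretic one: an automorphism of a free group $F$ that acts unipotently on $F/F_2$ acts unipotently on every graded quotient $\operatorname{gr}^i F = F_i/F_{i+1}$, since $\operatorname{gr}^i F$ is a subquotient of the $i$-th tensor power of $F/F_2$ (it is the image of $(F/F_2)^{\otimes i}$ under iterated Lie bracket), and the $i$-th tensor power of a unipotent operator is unipotent. Taking a common power, we get an integer $m$ such that the monodromy acts unipotently on $\operatorname{gr}^i F$ for all $i\le k$. Next I would transfer this to the truncated group $\tilde F = F/F_{k+1}$: the lower central series $\tilde F_1\supseteq\dots\supseteq\tilde F_k\supseteq\{id\}$ has graded pieces $\tilde F_i/\tilde F_{i+1}$ which are quotients of $\operatorname{gr}^i F$, hence the monodromy is unipotent on each of them, hence — building up a composition series through the central series — the monodromy is quasi-unipotent on $\tilde F$ itself, and therefore on the subgroup $\tilde G$ and on the further quotient $\tilde G/(\tilde G,\tilde F)$. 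Finally, a quasi-unipotent automorphism of a finitely generated Abelian group induces a quasi-unipotent automorphism on the dual, and the monodromy representation of $M_k$ is a sub-representation of that dual (\ref{monodromy2}), so it is quasi-unipotent as well.

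I expect the main obstacle to be the bookkeeping in the group-theoretic step: one must be careful that the relevant objects ($\operatorname{gr}^i F$, the subgroup $G$ and its normal closure, the commutator $(\tilde G,\tilde F)$) are genuinely \emph{functorial} for the monodromy automorphism — i.e. that the automorphism preserves $G$, which it does because $G$ is by construction the normal closure of the monodromy orbit $\mathcal O$ of $\gamma_t$ — and that passing to the truncation $\tilde F = F/F_{k+1}$ commutes with all these operations. Once functoriality and finite generation (via nilpotency of $\tilde F$, \cite{hall}) are in place, the quasi-unipotence propagates automatically from $F/F_2$ through the finitely many graded layers, and the appeal to the classical monodromy theorem supplies the one non-formal input. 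The passage to the dual representation is routine: if $A^m$ is unipotent on a lattice $L$, then $(A^*)^m = (A^m)^*$ is unipotent on $L^*$, and a sub-representation of a quasi-unipotent representation is quasi-unipotent.
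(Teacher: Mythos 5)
Your proof is correct and takes essentially the same approach as the paper's: the paper establishes quasi-unipotence on $gr^1 F = H_1(\Gamma_t,\Z)$ from the monodromy theorem plus pole permutation, propagates it to all graded pieces $gr^i F\cong L_X^i$ (Proposition \ref{quasi}, proved by an inductive computation with the variation $Var_* = l_*^p - id$ that is equivalent to your tensor-power argument), and then passes to $\tilde G/(\tilde G,\tilde F)$ via the filtration by $F_i$ encoded in the commutative diagram (\ref{diagram}) of Proposition \ref{prop3}. The only cosmetic difference is that you phrase this last step as a filtration/extension argument on $Q_i=\operatorname{im}(\tilde G\cap\tilde F_i)$, while the paper chases the polynomials $p_i(l_*)$ through the diagram directly.
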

Indeed, $M_k$ satisfies a Fuchsian equation on $D$, whose monodromy representation is a sub-representation of
the representation dual to (\ref{monodromy2}) \cite[Theorem 1.1]{gav} and \cite[Theorem 1]{gav1}. To prove
Proposition \ref{th4} we recall first some basic facts from the theory of free groups, e.g. Serre \cite{serre}, Hall
\cite{hall}. The graded group $gr F$ (\ref{graded}) associated to the free finitely generated group $F$
 is a Lie algebra with a bracket induced by the commutator $(.,.)$ on $F$. The Milnor fibration (\ref{milnor})
induces a representation
\begin{equation}\label{grmilnor}
\Z=\pi_1(D\setminus\{0\},*)\rightarrow  Aut_{Lie}(gr  F)
\end{equation}
where $Aut_{Lie}(gr  F)$ is the group of Lie algebra automorphisms of $gr  F$. Let $l$ be a generator of $
\pi_1(D\setminus\{0\},*)$. It induces   automorphisms  $l_*\in
Aut_{Lie}(gr  F)$ and $l_*|_{gr ^k F}\in Aut(gr ^k F)$. We note that $gr ^1 F = H_1(\Gamma_t,\Z)$ and hence
$l_*|_{gr ^1 F}$ is quasi-unipotent.
\begin{proposition}
\label{quasi}
Let $l_* \in Aut_{Lie}(gr  F)$ be such that $l_*|_{gr ^1 F}$ is quasi-unipotent. Then for every
$k\geq1$ the automorphism $l_*|_{gr ^k F}$ is quasi-unipotent.
\end{proposition}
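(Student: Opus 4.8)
The plan is to pass from the free group $F$ to its associated graded Lie algebra $\mathrm{gr}\,F$ over $\Q$ (or $\C$), which by the Magnus--Witt theorem is the free Lie algebra $L(V)$ on the vector space $V = \mathrm{gr}^1 F = H_1(\Gamma_t,\Q)$. Under this identification, $\mathrm{gr}^k F \otimes \Q$ becomes the degree-$k$ homogeneous component $L_k(V)$ of the free Lie algebra, and the automorphism $l_*|_{\mathrm{gr}^k F}$ is (the restriction of) the functorially induced automorphism $L(g)$, where $g = l_*|_V \in GL(V)$. So the statement reduces to a purely linear-algebraic assertion: if $g \in GL(V)$ is quasi-unipotent — i.e. all its eigenvalues are roots of unity — then the induced automorphism $L_k(g)$ of $L_k(V)$ is quasi-unipotent for every $k$.

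To see this I would diagonalize (or put into Jordan form) $g$ over $\C$ and look at how eigenvalues behave under the free Lie algebra construction. The free Lie algebra $L(V)$ is a sub-Lie-algebra of the free associative algebra $T(V) = \bigoplus_k V^{\otimes k}$, and $L_k(V) \subseteq V^{\otimes k}$ is a $g^{\otimes k}$-invariant subspace. Hence every eigenvalue of $L_k(g)$ is an eigenvalue of $g^{\otimes k}$ on $V^{\otimes k}$, and the eigenvalues of $g^{\otimes k}$ are exactly the products $\mu_{i_1}\mu_{i_2}\cdots\mu_{i_k}$ of $k$ eigenvalues of $g$. If each $\mu_{i_j}$ is a root of unity, so is every such product; therefore all eigenvalues of $L_k(g)$ are roots of unity, i.e. $L_k(g)$ is quasi-unipotent. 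The quasi-unipotence of $l_*|_{\mathrm{gr}^k F}$ follows immediately, since $\mathrm{gr}^k F$ is a finitely generated free abelian group and its eigenvalues (as a $\Z$-lattice automorphism) coincide with those of $L_k(g)$ after tensoring with $\Q$.

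The one point that needs a little care — and which I regard as the main technical obstacle — is the passage from the \emph{integral} graded group $\mathrm{gr}^k F$ to the \emph{rational} free Lie algebra: one must check that $\mathrm{gr}^k F$ is torsion-free (so that no eigenvalue information is lost) and that the functoriality $\mathrm{gr}(F) = L(\mathrm{gr}^1 F)$ is compatible with the action of $l_*$, i.e. that $l_*$ acting on $\mathrm{gr}^k F$ really is induced by $l_*|_{\mathrm{gr}^1 F}$ via the free Lie algebra functor. Both facts are classical (Magnus, Witt; see Serre \cite{serre}, Hall \cite{hall}): $\mathrm{gr}\,F$ over $\Z$ is a free Lie ring on $\mathrm{gr}^1 F$, its graded pieces are free abelian of the Witt-number rank, and the commutator bracket is manifestly natural in $F$. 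Granting these, the eigenvalue computation above is elementary and finishes the proof; the only remaining bookkeeping is to note that a quasi-unipotent lattice automorphism is precisely one whose eigenvalues are roots of unity, which is exactly what we have verified.
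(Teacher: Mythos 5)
Your proof is correct, and it reaches the same conclusion by a somewhat different and cleaner route than the paper's. Both arguments begin with the Magnus--Witt identification $\mathrm{gr}\,F \cong L_X$ (the paper cites Serre, Theorem~6.1, for exactly this step), after which a graded Lie automorphism of $\mathrm{gr}\,F$ is the one functorially induced by $g = l_*|_{\mathrm{gr}^1 F}$. The paper then proceeds by induction on degree: writing $Var_* = l_*^p - \mathrm{id}$ with $(Var_*)^q = 0$ on $\mathrm{gr}^1 F$, it uses the identity $Var_*[x,y] = [Var_*x,\,Var_*y] + [Var_*x,\,y] + [x,\,Var_*y]$ to show that $Var_*^{2q}$ vanishes on $L_X^2$, and similarly in higher degrees. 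You instead use the embedding $L_X^k \subseteq V^{\otimes k}$ inside the tensor algebra: $l_*|_{L_X^k}$ is the restriction of $g^{\otimes k}$ to an invariant subspace, so its eigenvalues lie among the $k$-fold products of eigenvalues of $g$, which are roots of unity. The two calculations encode the same fact in different dress --- the paper's bracket identity is precisely the reflection of $g^{\otimes 2} - \mathrm{id}^{\otimes 2} = (g-\mathrm{id})\otimes g + \mathrm{id}\otimes(g-\mathrm{id})$ inside $L_X^2$ --- but your eigenvalue formulation needs no induction and is the more transparent of the two. The paper's nilpotency route does hand you, essentially for free, the explicit annihilating polynomial $(z^{m_k}-1)^{n_k}$ on $\mathrm{gr}^k F$, which is quoted and used immediately afterward in the proof of Proposition~\ref{prop3}; but the existence of such a polynomial follows at once from quasi-unipotence, so your approach loses nothing. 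The two side conditions you flag --- torsion-freeness of $\mathrm{gr}^k F$ and the fact that $l_*$ really is the free-Lie-functor image of $g$ --- are indeed the only points needing care, and both are settled by the classical Magnus--Witt theory you reference.
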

The proof is by induction. Let $X=\{x_1,x_2,...,x_\mu\}$ be the free generators of $F$ and consider the free Lie
algebra $L_X$ on $X$. It is a Lie sub-algebra of the associative non-commutative algebra of polynomials in the
variables $x_i$ with a Lie bracket $[x,y]=xy-yx$. The canonical map $(x,y)\mapsto [x,y]$ induces an
 isomorphism of Lie algebras $gr  F \rightarrow L_X$, \cite[Theorem 6.1]{serre}. Let $L_X^k \subset L_X$ be the graded piece
 of degree $k$. We shall show that $l_*|_{L_X ^k }$ is quasi-unipotent. The proof is by induction. Suppose that
 the restriction of $l_*$ on $gr ^1 F = L_X^1=H_1(\Gamma_t,\Z)$ is quasi-unipotent, i.e. for some $p,q$, the restriction of
 $(l_*^p-id)^q$ on $gr ^1 F$ is $0$. The operator $Var_*= l_*^p-id$ is a linear automorphism, but not a Lie
 algebra automorphism. The identity

\begin{eqnarray*}
 Var_* [x,y] &=&(l_*^p-id)(xy-yx)= l_*^p \,x l_*^py- l_*^p \,y l_*^px -xy +yx \\
   &=& [Var_* x,Var_* y]+ [Var_* x,y]+
[x,Var_* y]
\end{eqnarray*}
shows that the restriction of $Var^{2q}$ on $L_X^2$ vanishes identically. Therefore The automorphism $l_*$
restricted to $L_X^2$ or $gr ^2 F$ is quasi-unipotent. The case $k\geq 3$ is similar. Proposition \ref{quasi} is
proved.$\Box$

According to the above Proposition for every $i\in \N$ there are integers $m_i,n_i$, such that the polynomial
$p_i(z)=(z^{m_i}-1)^{n_i}$ annihilates $l_*|_{gr ^k F}$. Proposition \ref{th4} will follow on its hand from the following
\begin{proposition}
\label{prop3}
The polynomial $ p= \prod_{i=1}^k p_i$ annihilates  $l_*\in Aut(\tilde{G}/(\tilde{G},\tilde{F}))$.
\end{proposition}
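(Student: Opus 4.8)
The plan is to exploit the fact that everything relevant lives in the finite-dimensional nilpotent picture obtained by truncating $F$ modulo $F_{k+1}$, and that the truncated group $\tilde F/F_{k+1}$ and its subgroups have associated graded pieces that inject into the corresponding graded pieces $gr^i F \cong L_X^i$ for $i\le k$. First I would set up the graded structure: since $\tilde F = F/F_{k+1}$ is nilpotent of class $\le k$, its lower central series $\tilde F_1\supseteq\tilde F_2\supseteq\cdots\supseteq\tilde F_{k+1}=\{id\}$ has quotients $gr^i\tilde F = \tilde F_i/\tilde F_{i+1}$, and the natural surjections $gr^i F \twoheadrightarrow gr^i\tilde F$ are in fact isomorphisms for $i\le k$ (because $F_{k+1}$ meets $F_i$ inside $F_{i+1}$ for $i\le k$). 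The monodromy automorphism $l_*$ is compatible with all of this, so on each $gr^i\tilde F$ it acts via the same operator as on $gr^i F = L_X^i$, which by Proposition \ref{quasi} (restated as: $p_i(l_*)$ annihilates $gr^i F$) is killed by $p_i(z)=(z^{m_i}-1)^{n_i}$.

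Next I would bring in the subgroups. Write $A = \tilde G/(\tilde G,\tilde F)$, the abelian group on which $l_*$ acts and which we must annihilate by $p=\prod_{i=1}^k p_i$. The key structural observation is that the filtration of $\tilde F$ by the $\tilde F_i$ induces a filtration on $A$: set $A_i = $ image in $A$ of $(\tilde G\cap \tilde F_i)\cdot(\tilde G,\tilde F)$. This is an $l_*$-stable descending filtration of $A$ with $A_1=A$ and $A_{k+1}=0$ (again using nilpotency class $\le k$), so it suffices to show $p_i(l_*)$ annihilates each graded piece $A_i/A_{i+1}$ — then the product $p=\prod p_i$ annihilates $A$ by composing the annihilators in order. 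Now $A_i/A_{i+1}$ is a sub-quotient of $gr^i\tilde F = gr^i F = L_X^i$: indeed $(\tilde G\cap\tilde F_i)\cdot\tilde F_{i+1}/\tilde F_{i+1}$ is a subgroup of $L_X^i$, and passing to the further quotient by $(\tilde G,\tilde F)$ only factors further. Since $l_*$ on $L_X^i$ is annihilated by $p_i(z)$ and $p_i$ has roots of unity as roots (so $l_*|_{L_X^i}$ is quasi-unipotent), the same polynomial annihilates any $l_*$-stable sub-quotient. Hence $p_i(l_*)$ kills $A_i/A_{i+1}$, and we are done.

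I would present the composition-of-annihilators step carefully: if $p_1(l_*)A\subseteq A_2$ (because $p_1$ kills $A/A_2$ — strictly I should order so that $p_i$ handles $A_i/A_{i+1}$, i.e. $p_1(l_*)A_1\subseteq A_2$ only if $p_1$ annihilates $A_1/A_2=L_X^1$, which it does by the base case of Proposition \ref{quasi}), then $p_2(l_*)p_1(l_*)A\subseteq p_2(l_*)A_2\subseteq A_3$, and inductively $p_k(l_*)\cdots p_1(l_*)A\subseteq A_{k+1}=\{0\}$; commutativity of the $p_i(l_*)$ (they are polynomials in the single operator $l_*$) lets us reorder freely to match the stated product $\prod_{i=1}^k p_i$.

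The main obstacle I anticipate is verifying cleanly that the induced filtration on $A=\tilde G/(\tilde G,\tilde F)$ has graded pieces that are genuinely sub-quotients of $L_X^i$ and not something larger — i.e. controlling the interaction between the subgroup $\tilde G$, the commutator $(\tilde G,\tilde F)$, and the lower central series $\tilde F_i$. This is the standard but slightly delicate commutator bookkeeping in nilpotent groups (one needs $(\tilde G\cap\tilde F_i,\tilde F)\subseteq \tilde F_{i+1}$ to see that the filtration is compatible with quotienting by $(\tilde G,\tilde F)$, which follows from $(\tilde F_i,\tilde F)=\tilde F_{i+1}$). Once that bookkeeping is in place, the quasi-unipotence transfer to sub-quotients and the composition of annihilators are formal. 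Everything else — the identification $gr^i F\cong gr^i\tilde F$ for $i\le k$ and $gr^i F\cong L_X^i$, and the annihilation of $l_*|_{L_X^i}$ by $p_i$ — is already supplied by the discussion preceding Proposition \ref{quasi} and by Proposition \ref{quasi} itself.
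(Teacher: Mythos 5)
Your proof is correct and takes essentially the same route as the paper: descend along the lower central series, annihilate one graded piece at a time via Proposition~\ref{quasi}, and compose the annihilators. The paper phrases this as an induction on representatives via the commutative diagram~(\ref{diagram}) (if $[\gamma]\in G/(G,F)$ is represented by a loop in $F_i$ then $p_i(l_*)[\gamma]$ is represented by a loop in $F_{i+1}$, so $p(l_*)[\gamma]$ lands in $F_{k+1}$ and dies after truncation), whereas you package the identical descent as a finite $l_*$-stable filtration $A_i$ of $\tilde G/(\tilde G,\tilde F)$ whose successive quotients are sub-quotients of $gr^i F$ --- a cosmetic repackaging of the same argument, with the commutator bookkeeping you flag ($(\tilde G\cap\tilde F_i,\tilde F)\subseteq\tilde F_{i+1}$) playing the role of the paper's diagram.
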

\textbf{Proof.}
Let $l\in \pi_1(D\setminus \{0\},*)$. It induces an automorphism of the Abelian groups  $G/(G,F ),  G \cap F_i / (  G \cap F_i ,F ), F_i/F_{i+1}$ denoted, by abuse of notation, by $l_*$. We denote by $p_i(l_*) = (l_*^{m_i}-id)^{n_i}$ the corresponding homomorphisms.
It follows from the definitions that the  diagram (\ref{diagram})   of Abelian groups, is commutative (the vertical arrows are induced by the canonical projections). Therefore if an equivalence classe
$[\gamma] \in G/(G,F)$ can be represented by a closed loop $\gamma \in F_i$, then $p_i(l_*)[\gamma]$ can be
represented by a closed loop in $F_{i+1}$. Therefore for every  $[\gamma] \in G/(G,F)$, the equivalence class
$p(l_*)$ can be represented by a closed loop in $F_{k+1}$. In other words $p(l_*)$ indices the zero automorphism
of $Aut(\tilde{G}/(\tilde{G},\tilde{F}))$.$\Box$

\begin{equation}\label{diagram}
\xymatrix{
   F_i/(F_i,F) \ar[r]^{p_i(l_*) }  & F_i/(F_i,F) \\
     G \cap F_i / (  G \cap F_i ,F ) \ar[r]^{p_i(l_*) }  \ar[u]_{\pi_2} \ar[d] _{\pi_1} &  G \cap F_i / (  G \cap F_i ,F)  \ar[u]_{\pi_2} \ar[d] _{\pi_1} \\
     G/(G,F )  \ar[r]^{p_i(l_*) } & G/(G,F )\\}
\end{equation}

\section{Non-oscillation  in the Darboux case}
\label{proof2}

In this section we prove Theorem~\ref{main2'}. First, we consider
\emph{elementary iterated integrals} - the iterated integrals over the piece of
the cycle lying near the saddles. We give a representation of the Mellin
transform of the elementary iterated integral as a converging multiple series.
This representation provides an asymptotic series for the elementary iterated
integral, with some explicit estimate  of the error, see
Theorem~\ref{thm:elemint} below.

The general iterated integral of length $k$ turns out to be a polynomial
(depending on $X_0$ and $k$ only) in elementary iterated integrals, by
Lemma~\ref{lem:mellin of iterated}. We give analogue of the estimates of
Theorem~\ref{thm:elemint} for such polynomials. This allows to prove a
quasianalyticity property: if the asymptotic series corresponding to the
iterated integral is zero, then the integral itself is zero. This implies
Theorem~\ref{main2'} since the zeros of the partial sums of the asymptotic
series do not accumulate to $0$, see Corollary~\ref{cor:quasian elemint}.

The arguments follow the pattern of \cite{novikov}, so the proofs are replaced
by a reference whenever possible.

\subsection{Iterated integral as a polynomial in elementary iterated integrals.}

Let $\gamma(u)$, $u\in[0,1]$, be a parameterization of the cycle
$\gamma_t\subset\{H=t\}$ (we fix some $t>0$ for a moment). As in
\cite{novikov}, the cycle of integration can be split into several pieces
$\gamma_j$, those lying near the sides of the polycycle, and those near the
vertices. We assume that the vector field can be linearized in the charts
containing these pieces, and call these pieces elementary. Let
$0=v_0<v_1<...<v_m<1$ be the parameterization of the ends of these pieces.

The iterated integral in the parameterized form is equal to
$$\int_{\Delta}g_1(u_1)...g_k(u_k) du_1...du_k,$$ where $\Delta=\{0\le
u_1\le...\le u_k\le1\}\subset\R^k$ is a simplex.

Consider connected components of the complement of $\Delta$ to the union of
hyperplanes $\cup_{i,j}\{u_j=v_i\}$. Each connected component can be defined
as
$$\{0\le u_1\le ...\le
u_{i_1}<v_1<u_{i_1+1}\le...<v_m<u_{i_m+1}\le...\le u_k\le1\},$$ i.e. is a
product $\Delta_1\times...\times \Delta_m$ of several simplices of smaller
dimension of the form $\Delta_j=\{v_j< u_{i_j+1}\le...\le u_{i_{j+1}}<
v_{j+1}\}$. Therefore, by Fubini theorem, integral of $g_1(u_1)...g_k(u_k)$
over this connected component is equal to the product of integrals
$\int_{\Delta_j}g_{i_j+1}...g_{i_{j+1}}du_{i_j+1}... du_{i_{j+1}}$, i.e. to the
product of iterated integrals $\int_{\gamma_j}\omega_{i_j+1}...\omega_{i_j+1}$.

Let us call the iterated integral over an elementary piece $\gamma_j$ an
\emph{elementary iterated integral}. The above arguments show that
\begin{lemma}\label{lem:mellin of iterated}
Iterated integral is a polynomial with integer coefficients in elementary
iterated integrals. The polynomial depends on the length of the iterated
integral only.
\end{lemma}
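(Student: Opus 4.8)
The plan is to establish Lemma~\ref{lem:mellin of iterated} by the combinatorial decomposition already sketched in the paragraphs preceding its statement, making the Fubini argument precise and keeping careful track of which elementary pieces and which forms appear. First I would fix the parameterization $\gamma(u)$, $u\in[0,1]$, with the breakpoints $0=v_0<v_1<\dots<v_m<1$ separating the elementary pieces $\gamma_1,\dots,\gamma_m$, and rewrite the iterated integral $\int_{\gamma_t}\omega_1\cdots\omega_k$ as the integral $\int_\Delta g_1(u_1)\cdots g_k(u_k)\,du_1\cdots du_k$ over the standard ordered simplex $\Delta=\{0\le u_1\le\dots\le u_k\le 1\}$, where $g_j(u)\,du=\gamma^*\omega_j$. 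Here I should note that the decomposition is compatible with the pole structure: since $\gamma_t$ stays inside $\Gamma$ and the $\omega_j$ have poles only on $\breve{\Gamma}$, each $g_j$ is analytic on $[0,1]$, so all the integrals below are honest convergent integrals.

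Next I would decompose $\Delta$ (up to a measure-zero set, namely its intersection with the hyperplanes $\{u_j=v_i\}$) into the disjoint union of its connected components in the complement of $\bigcup_{i,j}\{u_j=v_i\}$. Each component is indexed by a choice of a weakly increasing sequence of "cut indices" $0\le i_1\le i_2\le\dots\le i_m\le k$ recording how many of the ordered variables fall below each $v_j$; concretely the component is
$$\{\,0\le u_1\le\dots\le u_{i_1}<v_1<u_{i_1+1}\le\dots<v_2<\dots<v_m<u_{i_m+1}\le\dots\le u_k\le 1\,\},$$
which factors as a product $\Delta_0\times\Delta_1\times\dots\times\Delta_m$ of ordered simplices, the block $\Delta_j$ living in the cube $(v_j,v_{j+1})^{i_{j+1}-i_j}$ (with $v_0=0$, $v_{m+1}=1$). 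By Fubini, the integral of $g_1(u_1)\cdots g_k(u_k)$ over such a component equals the product over $j$ of $\int_{\Delta_j} g_{i_j+1}(u)\cdots g_{i_{j+1}}(u)\,du$, i.e. a product of elementary iterated integrals $\int_{\gamma_j}\omega_{i_j+1}\cdots\omega_{i_{j+1}}$ (an empty factor, when $i_j=i_{j+1}$, being $1$). Summing over all components gives
$$\int_{\gamma_t}\omega_1\cdots\omega_k=\sum_{0\le i_1\le\dots\le i_m\le k}\ \prod_{j=0}^{m}\int_{\gamma_j}\omega_{i_j+1}\cdots\omega_{i_{j+1}},$$
which exhibits the iterated integral as a sum, with coefficients $+1$, of products of elementary iterated integrals, each elementary factor being a consecutive sub-block $\omega_{a+1}\cdots\omega_b$ of the original ordered string. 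Regarding the elementary factors as the indeterminates, this is a polynomial with integer (in fact nonnegative integer) coefficients whose shape depends only on $k$ (and on $m$, the number of pieces, which is determined by $X_0$), not on the specific forms $\omega_i$ or the level $t$.

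I do not expect a serious obstacle here: the statement is essentially Chen's formula for the behaviour of iterated integrals under subdivision of the path, and the only things to be careful about are (i) that the boundary hyperplanes contribute zero measure so the decomposition is valid for the Lebesgue integral, (ii) that degenerate blocks are correctly interpreted as the constant $1$, and (iii) that the elementary pieces $\gamma_j$ indeed lie in charts where $X_0$ is linearizable, which is exactly the hypothesis under which we declared them elementary. The mild subtlety worth a sentence in the write-up is that the $v_i$, hence the pieces $\gamma_j$, can be chosen uniformly in $t$ for $t$ near $0$, so that the same polynomial identity holds for all small $t>0$ simultaneously; this uniformity is what makes the lemma useful for the subsequent Mellin-transform analysis.
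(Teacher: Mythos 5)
Your proof is correct and follows essentially the same route as the paper: decompose the ordered simplex $\Delta$ by the hyperplanes $\{u_j=v_i\}$, apply Fubini on each product-of-simplices component, and sum over components. You supply a few details the paper leaves implicit (the boundary hyperplanes have measure zero, degenerate blocks are the constant $1$, the correct count of $m+1$ blocks $\Delta_0,\dots,\Delta_m$, and uniformity in $t$), but the underlying argument is the same.
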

The above arguments give an explicit form of this polynomial (though we will
not need it).

\subsection{Mellin transform of elementary iterated integrals}
There are two types of  elementary pieces: those lying in charts covering sides
of the polycycle, and those lying in charts  covering saddles. Similarly to
\cite{novikov}, the elementary iterated integrals corresponding to the pieces
of the first type are just meromorphic functions of the parameter on the
transversal, i.e. of $t^{1/\lambda_i}$.

From this moment we assume that the elementary piece lies near the saddle
$\{P_1=P_2=0\}$. In other words, we assume that
$\gamma(t)=\{x^{\lambda_1}y^{\lambda_2}=t\}\cap\{0\le x,y \le 1\}$.

We give description of iterated integrals in terms of their Mellin transforms. Recall that the Mellin transform of a function $f(t)$ on the interval $[0,1]$ is defined as  $\M f(s)=\int_0^1t^{s-1}f(t)dt$.
To describe the Mellin transform of the elementary iterated integrals over
$\gamma(t)$ let us  introduce a generalized compensator. We denote in this
section by $l$ the length of the elementary iterated integral.
 For $l\in \N$ and $\alpha=(m_1,n_1,....,m_l,n_l)\in
\Z^{2l}$ we define $\ell^l_{\alpha}(s;\lambda_1,\lambda_2)$ as
\begin{equation}\label{eq:compensator}
\ell^l_{\alpha}(s;\lambda_1,\lambda_2)=\prod_{j=0}^l\left(s+\lambda_1^{-1}\sum_{i=1}^jm_i+
\lambda_2^{-1}\sum_{i=j+1}^l n_i\right)^{-1}.
\end{equation}
We call $\M^{-1}\ell^l_{\alpha}(s;\lambda_1,\lambda_2)$ a generalized
compensator. Particular case of $l=1$ corresponds to the Ecalle-Roussarie
compensator. Generalized compensator is  a finite linear combination of
monomials of type $t^\mu(\log t)^{l'}$, for $l'\le l$.

We omit $\lambda_1,\lambda_2$ from the notation till the end of the section.

\begin{lemma}\label{lem:mellin elemint}
After some rescaling of $t$ the Mellin transform of an elementary iterated
integral is given by the following formula:
\begin{equation}\label{eq:mellin elemint}
\M\int\omega_1...\omega_l=\sum_{\alpha}c_\alpha \ell^l_{\alpha}, \quad
\alpha\in\left(\Z_{>-M}\right)^{2l},
\end{equation}
where $M$ is an upper bound for the order of poles of  $\omega_i$. Moreover,
$|c_{\alpha}|\le C2^{-|\alpha|}$.

\end{lemma}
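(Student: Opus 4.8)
The plan is to compute the Mellin transform of an elementary iterated integral $\int_{\gamma(t)}\omega_1\cdots\omega_l$ directly, by parameterizing the piece of the cycle near the saddle $\{x^{\lambda_1}y^{\lambda_2}=t\}$ and expanding each $\omega_i$ into its Laurent series. Concretely, on the relevant chart one has $\gamma(t)$ running from a point on $\{x=x_0\}$ to a point on $\{y=y_0\}$; along it $x$ decreases while $y$ increases, linked by the constraint $x^{\lambda_1}y^{\lambda_2}=t$. Writing each $\omega_i = \sum_{m,n} a^{(i)}_{m,n} x^m y^n \tfrac{dx}{x}$ (and a similar term with $\tfrac{dy}{y}$, which by the relation $\lambda_1\tfrac{dx}{x}+\lambda_2\tfrac{dy}{y}=0$ on $\{H=t\}$ can be absorbed), the iterated integral becomes a sum over multi-indices of iterated integrals of monomials. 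The pole order bound $M$ forces the exponents $m_i,n_i$ to satisfy $m_i,n_i>-M$, giving the index range $\alpha\in(\Z_{>-M})^{2l}$.

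The key computation is that an iterated integral of monomials of the form $\int x^{m_1}y^{n_1}\tfrac{dx}{x}\cdots x^{m_l}y^{n_l}\tfrac{dx}{x}$ along $\gamma(t)$, after the Mellin transform $\M f(s)=\int_0^1 t^{s-1}f(t)\,dt$, collapses to precisely the product form $\ell^l_\alpha(s)=\prod_{j=0}^l\bigl(s+\lambda_1^{-1}\sum_{i=1}^j m_i+\lambda_2^{-1}\sum_{i=j+1}^l n_i\bigr)^{-1}$. This is the heart of the matter: on the saddle chart one substitutes $y=(t/x^{\lambda_1})^{1/\lambda_2}$, so that a monomial $x^{m}y^{n}$ becomes $t^{n/\lambda_2}x^{m-\lambda_1 n/\lambda_2}$; the iterated integration over the nested simplex in the $x$-variable produces, stage by stage, a denominator linear in the accumulated exponents, and the Mellin integral in $t$ contributes the remaining factor — exactly the $j=0$ (equivalently $j=l$) term. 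The "rescaling of $t$" in the statement is what normalizes the chart so that the endpoints are at $x_0=y_0=1$, eliminating boundary contributions of the partial integrations and leaving only the clean product $\ell^l_\alpha$. This mirrors the computation in \cite{novikov}, so I would set up the substitution carefully for $l=1$ (recovering the Ecalle--Roussarie compensator) and then note the $l\ge 2$ case follows by induction on the length, each additional one-form contributing one more factor to the product.

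The remaining point is the coefficient estimate $|c_\alpha|\le C2^{-|\alpha|}$. The coefficients $c_\alpha$ are built as finite sums and products of the Laurent coefficients $a^{(i)}_{m,n}$ of the $\omega_i$; since each $\omega_i$ is meromorphic in a fixed neighborhood $U$ with poles only on $\breve\Gamma$, its Laurent coefficients on the saddle chart decay geometrically, $|a^{(i)}_{m,n}|\le C_0 \rho^{-(m+n)}$ for some $\rho>1$ (Cauchy estimates on a polydisc slightly larger than $\{|x|,|y|\le 1\}$ after the rescaling). Taking the product over the $l$ forms and summing over the finitely many ways a given total multi-degree $\alpha$ is distributed gives the bound with $2^{-|\alpha|}$ (any base less than $\rho$ works after adjusting $C$; the factor $2$ is chosen for convenience, absorbing the polynomial-in-$|\alpha|$ number of distributions).

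I expect the main obstacle to be bookkeeping rather than conceptual: keeping track of which terms in $\omega_i$ carry $\tfrac{dx}{x}$ versus $\tfrac{dy}{y}$ and verifying that using the relation $\lambda_1\tfrac{dx}{x}=-\lambda_2\tfrac{dy}{y}$ on the fiber to rewrite everything in terms of $\tfrac{dx}{x}$ only shifts the multi-indices $(m_i,n_i)$ within the allowed range $\Z_{>-M}$ and does not destroy the geometric decay of coefficients. A secondary delicate point is justifying the interchange of the Mellin integral with the (infinite) Laurent expansions and with the iterated integration; this is legitimate on a punctured neighborhood of $t=0$ because the coefficient estimates give normal convergence of $\sum_\alpha c_\alpha \ell^l_\alpha(s)$ on $\{\operatorname{Re} s > s_0\}$ for $s_0$ large, and one then reads off $\M\int\omega_1\cdots\omega_l$ by uniqueness of the Mellin transform. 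Since \cite{novikov} carries out the analogous estimates in the rank-one situation, I would present the $l=1$ case in full and defer the inductive step and the convergence bookkeeping to a reference to \cite{novikov}, as the authors announce they will do.
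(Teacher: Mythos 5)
Your proposal matches the paper's proof: reduce to monomial one-forms via Laurent expansion in the linearizing chart, compute the Mellin transform of the monomial iterated integral explicitly to obtain $\ell^l_\alpha$, and derive the $2^{-|\alpha|}$ bound from Cauchy estimates after rescaling $H$ so that the linearizing chart covers the bidisk $\{|x|,|y|\le 2\}$. The one point to watch is your suggested induction on $l$: the factors of $\ell^{l+1}_{\alpha'}$ are not simply those of $\ell^l_\alpha$ plus one more, since appending $(m_{l+1},n_{l+1})$ shifts $s\mapsto s+\lambda_2^{-1}n_{l+1}$ in the first $l+1$ factors; the paper sidesteps this by a single Fubini swap (integrate in $t$ first, then $x_1,\dots,x_l$ in order), which produces all $l+1$ factors of $\ell^l_\alpha$ directly and uniformly in $l$.
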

This is a straightforward generalization of the construction of \cite{novikov},
which corresponds to $l=1$.

\begin{proof}
In the linearizing coordinates the first integral is written as $H=
x^{\lambda_1}y^{\lambda_2}$.  The Mellin transform of the iterated integral can
be computed explicitly for monomial forms $\omega_i=x^{m_i-1}y^{n_i}\,dx$:
\begin{eqnarray}\label{eq:elem monomial int}
\M\int\omega_1...\omega_l=\phantom{\hskip7cm}\\
\int_0^1t^{s-1}\int_{t^{1/\lambda_1}}^1x_1^{m_1-1}y_1^{n_1}
\int_{x_1}^1x_2^{m_2-1}y_2^{n_2}\int_{x_2}^1...\int_{x_{l-1}}^1x_l^{m_l-1}y^{n_l}
dx_l...dx_1dt=\\
=\int_0^1 t^{\frac{n_1+...+n_l}{\lambda_2}}t^{s-1}
\int_{t^{1/\lambda_1}}^1x_1^{m_1-1-n_1 \mu}
\int_{x_1}^1...\int_{x_{l-1}}^1x_l^{m_l-1-n_l\mu}
dx_l...dx_1dt=\\
=\int_0^1x_l^{m_l-1-n_l\mu}\int_0^{x_l}x_{l-1}^{m_{l-1}-1-n_{l-1}\mu}...
\int_0^{x_1^{\lambda_1}}t^{\frac{n_1+...+n_l}{\lambda_2}+s-1}dt...dx_l=\\
=\lambda_1^{-l}\prod_{j=0}^l\left(s+\lambda_1^{-1}\sum_{i=1}^jm_i+
\lambda_2^{-1}\sum_{i=j+1}^l n_i\right)^{-1}=\lambda_1^{-l}\ell^l_{\alpha}.
\end{eqnarray}
Similar formula holds for $\omega_i=x^{m_i}y^{n_i-1}\,dy$.

After rescaling of $H$ we can assume that the linearizing chart covers the
bidisk $\{0\le |x|,|y|\le 2\}$. Then the coefficients of the forms $\omega_i$
are meromorphic  in the bidisk, with poles on $\{xy=0\}$ of order at most $M$.
So $\omega_i$ can be represented as a convergent power series
$$
\omega_i=\sum_{m_i,n_i\in\Z_{>-M}}\left(c'_{i,m_i,n_i}x^{m_i-1}y^{n_i}\,dx+
c''_{i,m_i,n_i}x^{m_i}y^{n_i-1}\,dy\right),
$$
with coefficients $c'_{i,m_i,n_i},c''_{i,m_i,n_i}$ decreasing as
$O(2^{-m_i-n_i})$. Therefore the elementary iterated integral is a converging
sum of elementary iterated integrals of monomial forms, with coefficients being
products of $c'_{i,m_i,n_i},c''_{i,m_i,n_i}$, $i=1,...,l$ and $m_i,n_i\in \N$.
From (\ref{eq:elem monomial int}) one gets upper bounds for the elementary
iterated integrals of monomial forms, which guarantees that one can perform
Mellin transform termwise, and we get the required formula.\end{proof}

As in \cite{novikov}, one can check that the inverse Mellin transform of Mellin
transforms of elementary iterated integrals can be defined as
\begin{equation}\label{eq:invmellin}
\M^{-1}g=\frac 1 {2\pi i}\int_{\partial \Pi}t^{-s}g(s) ds, \quad \Pi=\{\Re s\le
M<+\infty, |\Im s|\le 1\},
\end{equation}
where $M$ is sufficiently big. Indeed, $|\ell^l_{\alpha}(s)|\le 1$ on $\Pi$, so
(\ref{eq:mellin elemint}) converges uniformly on this contour, so one can
integrate the series (\ref{eq:elem monomial int}) termwise. However, for each
term (\ref{eq:invmellin}) does define the inverse Mellin transform, as each term is just a rational function in $s$.
\begin{corollary}\label{cor:elemint sum}
An elementary iterated integral can be represented as a convergent sum
\begin{equation}\label{eq:elemint sum}
\int\omega_1...\omega_l=\sum_{\alpha}c_\alpha \M^{-1}\ell^l_{\alpha}.
\end{equation}
\end{corollary}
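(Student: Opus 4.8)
Corollary \ref{cor:elemint sum} is essentially immediate once Lemma \ref{lem:mellin elemint} and equation \eqref{eq:invmellin} are in hand, so the proposal is short.

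The plan is to apply the inverse Mellin transform $\M^{-1}$, as defined by the contour integral \eqref{eq:invmellin}, to both sides of the formula \eqref{eq:mellin elemint} of Lemma \ref{lem:mellin elemint}, and justify that it can be carried out termwise. Concretely, first I would recall that by Lemma \ref{lem:mellin elemint} the Mellin transform of the elementary iterated integral is $\M\int\omega_1\dots\omega_l = \sum_\alpha c_\alpha \ell^l_\alpha(s)$ with $|c_\alpha|\le C 2^{-|\alpha|}$, the index $\alpha$ running over $(\Z_{>-M})^{2l}$. Next I would invoke the key analytic estimate already noted after \eqref{eq:invmellin}: on the contour $\partial\Pi = \partial\{\Re s\le M,\ |\Im s|\le 1\}$ one has $|\ell^l_\alpha(s)|\le 1$ uniformly in $\alpha$ (each factor $(s+\lambda_1^{-1}\sum m_i+\lambda_2^{-1}\sum n_i)^{-1}$ has modulus $\le 1$ there, since $M$ was chosen large enough that every such linear combination has real part $\le$ the real part of points on the relevant part of the contour, making the denominators $\ge 1$ in modulus, and on the horizontal edges $|\Im s|=1$ gives the same bound). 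Consequently $\sum_\alpha |c_\alpha|\,|\ell^l_\alpha(s)| \le C\sum_\alpha 2^{-|\alpha|} < \infty$ uniformly on $\partial\Pi$, so the series \eqref{eq:mellin elemint} converges uniformly on that contour and $\frac1{2\pi i}\int_{\partial\Pi} t^{-s}(\cdot)\,ds$ may be interchanged with the summation over $\alpha$. This yields
$$
\int\omega_1\dots\omega_l \;=\; \M^{-1}\!\Big(\sum_\alpha c_\alpha \ell^l_\alpha\Big) \;=\; \sum_\alpha c_\alpha\,\M^{-1}\ell^l_\alpha,
$$
which is exactly \eqref{eq:elemint sum}; each term $\M^{-1}\ell^l_\alpha$ is well defined because $\ell^l_\alpha$ is a rational function of $s$, decaying at infinity, whose only poles lie to the left of the vertical edge $\Re s = M$, so the contour integral \eqref{eq:invmellin} makes sense and computes it (closing the contour to the left picks up the residues and produces the finite combination of monomials $t^\mu(\log t)^{l'}$ mentioned after \eqref{eq:compensator}). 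Finally, one should remark that $\M^{-1}$ is a left inverse of $\M$ on the class of functions under consideration — an elementary iterated integral has moderate growth as $t\to 0$ and is bounded as $t\to 1$, so it lies in the domain where $\M\circ\M^{-1}$ and $\M^{-1}\circ\M$ act as the identity — which is what licenses writing $\int\omega_1\dots\omega_l = \M^{-1}(\M\int\omega_1\dots\omega_l)$ in the first place.

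There is essentially no obstacle here: the only point requiring a little care is the uniform bound $|\ell^l_\alpha(s)|\le 1$ on $\partial\Pi$, which is what makes the termwise inversion legitimate, and this is already asserted in the text right after \eqref{eq:invmellin}. The argument is the direct $l$-variable analogue of the corresponding step in \cite{novikov}, so I would simply cite \cite{novikov} for the routine verification and keep the proof to the two or three lines above.
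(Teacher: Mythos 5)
Your argument is exactly the paper's: the corollary is stated in the text as an immediate consequence of Lemma~\ref{lem:mellin elemint} together with the uniform bound $|\ell^l_\alpha(s)|\le 1$ on the contour $\partial\Pi$ (for $M$ large enough) and the geometric decay $|c_\alpha|\le C2^{-|\alpha|}$, which license termwise application of the contour-integral inverse Mellin transform \eqref{eq:invmellin}. Your supplementary remarks (each $\ell^l_\alpha$ is rational and vanishing at infinity so $\M^{-1}\ell^l_\alpha$ is a well-defined finite sum of $t^\mu(\log t)^{j}$; the elementary iterated integral lies in the class where $\M^{-1}\circ\M=\mathrm{id}$) are the same routine verifications the paper delegates to \cite{novikov}, so the proposal is correct and in line with the paper.
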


The following estimate is the keystone of the proof, since it allows to
estimate the difference between the elementary iterated integral and the
partial sum of its asymptotic series.

\begin{lemma}\label{lem:upper bound elemint}
Let $I=\in\omega_1...\omega_l$ be an elementary iterated integral, and let $C$
be defined as in \ref{lem:mellin elemint}.  For any $s\in\C$ denote by
$\rho(s)$ the minimal distance from $S$ to the poles of $\M I$.

Then $|\M I(s)|\le C\rho(s)^{-l}$.
\end{lemma}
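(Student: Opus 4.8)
The plan is to bound $|\M I(s)|$ directly from the series representation
$\M I = \sum_{\alpha} c_\alpha \ell^l_\alpha$ of Lemma~\ref{lem:mellin elemint}, using the decay estimate $|c_\alpha| \le C 2^{-|\alpha|}$ together with a pointwise estimate of each factor $\ell^l_\alpha(s)$ in terms of $\rho(s)$. The key observation is that $\ell^l_\alpha(s)$ is a product of $l+1$ factors, each of the form $(s - s_{\alpha,j})^{-1}$ where $s_{\alpha,j} = -\lambda_1^{-1}\sum_{i=1}^j m_i - \lambda_2^{-1}\sum_{i=j+1}^l n_i$ is one of the poles of $\M I$ (more precisely, $\M I$ has poles at a subset of these points, and every such point is a pole of some $\ell^l_\alpha$; in any case each individual factor is $(s-\text{pole})^{-1}$). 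Hence for each $\alpha$ and each $j$ we have $|s - s_{\alpha,j}| \ge \rho(s)$, so $|\ell^l_\alpha(s)| \le \rho(s)^{-(l+1)}$. Wait --- this would give $\rho(s)^{-(l+1)}$, not $\rho(s)^{-l}$, so I must be more careful: one should split off one factor of the product and bound it by $\rho(s)^{-1}$ only for $l$ of the $l+1$ factors, using the remaining factor to absorb... Actually the cleaner route is the following.

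First I would fix $s$ and choose, for the purpose of the estimate, to bound $l$ of the $l+1$ factors in $\ell^l_\alpha(s)$ by $\rho(s)^{-1}$ each, and the single remaining factor $(s-s_{\alpha,j_0})^{-1}$ by a constant depending only on the geometry of the strip $\Pi$ where the estimate is needed --- but since the statement claims a clean bound $C\rho(s)^{-l}$ valid for all $s \in \C$, this cannot work uniformly. The correct interpretation must be that $\M I$ itself, as computed in Lemma~\ref{lem:mellin elemint}, is actually a \emph{finite} linear combination after resummation --- no: the point is rather that in the product $\ell^l_\alpha$, the outermost factor $j=0$ gives $(s + \lambda_2^{-1}\sum_{i=1}^l n_i)^{-1}$ and the innermost $j=l$ gives $(s + \lambda_1^{-1}\sum_{i=1}^l m_i)^{-1}$; one of these can be paired against the decay of $c_\alpha$. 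So the plan is: write $|c_\alpha \ell^l_\alpha(s)| \le C 2^{-|\alpha|} \cdot \rho(s)^{-l} \cdot |s - s_{\alpha,j^*}|^{-1}$ for a chosen $j^*$, and then sum over $\alpha$. For the sum $\sum_\alpha 2^{-|\alpha|} |s-s_{\alpha,j^*}|^{-1}$ to be bounded by a constant, one uses that $s_{\alpha,j^*}$ is a pole of $\M I$ hence $|s - s_{\alpha,j^*}| \ge \rho(s)$ as well, giving an extra $\rho(s)^{-1}$ and the bound $C \rho(s)^{-(l+1)} \sum_\alpha 2^{-|\alpha|} \le C' \rho(s)^{-(l+1)}$. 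If the intended exponent is genuinely $l$, then one of the $l+1$ linear factors must be identically cancellable or one indexes the compensator by $l-1$ effectively; I would double-check the indexing convention in (\ref{eq:compensator}) against \cite{novikov}, and most likely the resolution is that after the rescaling in Lemma~\ref{lem:mellin elemint} one of the $l+1$ factors is absorbed.

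Modulo that bookkeeping, the structure of the argument is: (i) invoke the absolutely convergent expansion $\M I(s) = \sum_\alpha c_\alpha \ell^l_\alpha(s)$ from Lemma~\ref{lem:mellin elemint}, valid away from the poles; (ii) note that the pole set of $\M I$ is contained in $\bigcup_\alpha\{s_{\alpha,0},\dots,s_{\alpha,l}\}$, so every linear factor appearing in every $\ell^l_\alpha$ vanishes at a point of the pole set, whence $|(s-s_{\alpha,j})^{-1}| \le \rho(s)^{-1}$ for all $\alpha, j$; (iii) bound $|\ell^l_\alpha(s)| \le \rho(s)^{-l}$ (after separating out the factor controlled by the $2^{-|\alpha|}$ decay, or with exponent $l+1$ if that is what is really meant); (iv) conclude $|\M I(s)| \le \sum_\alpha |c_\alpha| \rho(s)^{-l} \le C\rho(s)^{-l}\sum_\alpha 2^{-|\alpha|} = C'\rho(s)^{-l}$, renaming $C'$ as $C$. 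The main obstacle is purely the exponent accounting in step (iii): making precise why one power of $(s-\text{pole})^{-1}$ is free (absorbed into the summable constant $\sum 2^{-|\alpha|}$) rather than contributing to the $\rho(s)$ exponent. I would handle this by fixing, for each $\alpha$, the index $j(\alpha)$ realizing $\max_j |s_{\alpha,j}|$ (so that $|s - s_{\alpha,j(\alpha)}| \to \infty$ as $|\alpha|\to\infty$ for fixed $s$, hence is bounded below by a positive constant times, say, $|\alpha|$, for $|\alpha|$ large), split $\ell^l_\alpha = (s-s_{\alpha,j(\alpha)})^{-1}\cdot\prod_{j\ne j(\alpha)}(s-s_{\alpha,j})^{-1}$, bound the product of the remaining $l$ factors by $\rho(s)^{-l}$, and absorb $2^{-|\alpha|}|s-s_{\alpha,j(\alpha)})|^{-1}$ into a convergent numerical series.
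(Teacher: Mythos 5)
Your core approach is exactly the paper's one-line proof: expand $\M I=\sum_\alpha c_\alpha\ell^l_\alpha$ from Lemma~\ref{lem:mellin elemint}, bound each $\ell^l_\alpha(s)$ by a power of $\rho(s)^{-1}$ using the fact that each linear factor $(s-s_{\alpha,j})^{-1}$ has its zero among the (formal) poles, and then sum using $|c_\alpha|\le C2^{-|\alpha|}$.

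You are right to flag the exponent: the product in (\ref{eq:compensator}) runs over $j=0,\dots,l$, so it has $l+1$ factors and the natural bound is $\rho(s)^{-(l+1)}$, not $\rho(s)^{-l}$. This is a benign off-by-one in the paper --- the statement and the proof carry the same slip, and in the only place the lemma is used (the proof of Theorem~\ref{thm:elemint}) all that is needed is $|\M I|\le O(p^{\const})$ on $\partial\Pi_p$, which $\rho^{-(l+1)}$ supplies just as well. You should simply replace $l$ by $l+1$ and move on. Your proposed salvage --- pairing one factor $|s-s_{\alpha,j(\alpha)}|^{-1}$ against the $2^{-|\alpha|}$ decay --- does not work uniformly in $s$: for small $|\alpha|$ the factor $|s-s_{\alpha,j(\alpha)}|^{-1}$ can be as large as $\rho(s)^{-1}$ when $s$ approaches a pole, so it is not absorbable into a convergent numerical series; it would give at best $O(\rho(s)^{-(l+1)})$ again. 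So spending effort trying to recover the exponent $l$ is wasted.

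There is also a small logical reversal in your step~(ii): you write that the pole set of $\M I$ is \emph{contained in} $\bigcup_\alpha\{s_{\alpha,0},\dots,s_{\alpha,l}\}$ and then conclude $|s-s_{\alpha,j}|\ge\rho(s)$. The containment you state goes the wrong way for that conclusion --- you need each $s_{\alpha,j}$ to actually lie in the pole set, which could fail under cancellations. The clean fix, implicit in the paper and consistent with how $\rho$ is used in Theorem~\ref{thm:elemint}, is to let $\rho(s)$ be the distance from $s$ to the entire formal pole set $\bigcup_\alpha\{s_{\alpha,j}\}$ (a superset of the true poles of $\M I$); then $|s-s_{\alpha,j}|\ge\rho(s)$ holds by definition, each $|\ell^l_\alpha(s)|\le\rho(s)^{-(l+1)}$, and $|\M I(s)|\le\rho(s)^{-(l+1)}\sum_\alpha|c_\alpha|\le C'\rho(s)^{-(l+1)}$, which is all that is needed downstream.
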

\begin{proof} Indeed, the absolute value of each term in the sum in
(\ref{eq:mellin elemint}) can be estimated from above as
$|c_{\alpha}|\rho(s)^{-l}$, and the estimate follows from $|c_{\alpha}|<C2^{-\alpha}$.\end{proof}

\subsection{Asymptotic series of  elementary iterated integrals}

Inverse Mellin transform of $\ell^l_{\alpha}$ is a linear combination of
monomials of the type $t^\mu(\log t)^j$, where $\mu\in
\lambda_1^{-1}\Z+\lambda_2^{-1}\Z$ and  $0\le j\le l$. Collecting similar terms
in the expression for the elementary iterated integral $I$ together, we get a
formal series $\hat{I}$ of such terms, possibly divergent:
\begin{equation}\label{eq:formal elemint}
\hat{I}=\sum_{\mu, j} \hat{c}_{\mu, j}t^\mu(\log t)^j, \quad\text{where}\quad
\mu\in \lambda_1\Z_{>-M}+\lambda_2\Z_{>-M},\quad 0\le j\le l.
\end{equation}

\begin{theorem}\label{thm:elemint}
$\hat{I}$ is an asymptotic series of $I$. Moreover, for each $p\in\N$ there
exists $s_p\in[p,p+1]$ such that the partial sums $\hat{I}_p=\sum_{j,\mu<s_p}
\hat{c}_{\mu, j}t^\mu(\log t)^j$ of $\hat{I}$ satisfy the following:
\begin{equation}\label{eq:tail elemint}
|I(t)-\hat{I}_p(t)|\le C s_p^{l^2}t^{s_p},\quad t\in[0,1]
\end{equation}
where $C$ depends on $I$ but not on $p$.
\end{theorem}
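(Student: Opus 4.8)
\textbf{Proof plan for Theorem~\ref{thm:elemint}.}

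The plan is to use the representation of the elementary iterated integral as the convergent sum $I=\sum_\alpha c_\alpha \M^{-1}\ell^l_\alpha$ from Corollary~\ref{cor:elemint sum}, together with the contour-integral formula (\ref{eq:invmellin}) for $\M^{-1}$, and to estimate the tail by shifting the integration contour to the right. First I would observe that each $\M^{-1}\ell^l_\alpha$ is, term by term, a finite linear combination of monomials $t^\mu(\log t)^j$ with $\mu\in\lambda_1^{-1}\Z_{>-M}+\lambda_2^{-1}\Z_{>-M}$ and $0\le j\le l$, so collecting like terms gives the formal series $\hat I$ of (\ref{eq:formal elemint}); the only issue is convergence/asymptoticity, which is exactly what the contour argument controls. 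The key point is that $\M I(s)$ is meromorphic in a right half-plane $\{\Re s\le M\}$ with poles precisely at the points $\mu\in-(\lambda_1^{-1}\Z_{>-M}+\lambda_2^{-1}\Z_{>-M})$, and that the partial sum $\hat I_p$ corresponds, via the residue theorem, to moving the contour $\partial\Pi$ in (\ref{eq:invmellin}) from $\Re s=M$ to a vertical line $\Re s=-s_p$ (with $s_p\in[p,p+1]$): the residues picked up are exactly the monomials with $\mu<s_p$, and the remaining integral over the shifted contour is the error $I(t)-\hat I_p(t)$.

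The next step is to choose $s_p$ and bound the shifted integral. On the vertical segment $\Re s=-s_p$, one has $|t^{-s}|=t^{s_p}\le t^{s_p}$ for $t\in[0,1]$, so $|I(t)-\hat I_p(t)|\le t^{s_p}\cdot\frac1{2\pi}\int|\M I(s)|\,|ds|$ over that segment (plus the horizontal connecting pieces at $\Im s=\pm1$, which are handled the same way since $|\M I|\le C\rho(s)^{-l}$ decays there as $\Re s\to-\infty$). By Lemma~\ref{lem:upper bound elemint}, $|\M I(s)|\le C\rho(s)^{-l}$ where $\rho(s)$ is the distance from $s$ to the pole set. Here is where $s_p$ must be chosen carefully: the poles are at a discrete but possibly badly-spaced subset of the negative reals (the semigroup generated by $\lambda_1^{-1},\lambda_2^{-1}$), so for a generic $s_p\in[p,p+1]$ the line $\Re s=-s_p$ may pass very close to a pole. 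However, by a pigeonhole/measure argument one can select $s_p\in[p,p+1]$ so that $\rho(s)$ is bounded below by a quantity that is only polynomially small in $p$ — roughly, the number of poles in any unit interval grows at most polynomially (in fact linearly) in $p$, so one can find $s_p$ keeping distance $\gtrsim 1/\mathrm{poly}(p)$ from all of them. Combined with the length of the contour being $O(p)$ and the factor $\rho^{-l}$, this yields a bound of the shape $C s_p^{l^2}t^{s_p}$ after absorbing constants; the exponent $l^2$ comes from combining the $\rho^{-l}$ decay with the polynomial control on $\rho$ and on the number/length of contour pieces, each costing a power of $s_p$ bounded in terms of $l$.

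The main obstacle is precisely this contour-selection estimate: one must verify that the poles of $\M I$ — which form the additive semigroup $\{\,\lambda_1^{-1}m+\lambda_2^{-1}n : m,n>-M\,\}$ shifted and negated — do not cluster too densely, so that a good vertical line $\Re s=-s_p$ with $s_p\in[p,p+1]$ exists, and then to track how the various powers of $s_p$ (from contour length, from the number of horizontal pieces, from $\rho^{-l}$) combine into the stated exponent $l^2$. Since, as the authors note, ``the arguments follow the pattern of \cite{novikov},'' I would carry out each of these steps by analogy with the $l=1$ case treated there, the only genuinely new bookkeeping being the dependence of the error exponent on the length $l$ of the iterated integral, which propagates through Lemma~\ref{lem:upper bound elemint}'s bound $|\M I|\le C\rho^{-l}$ rather than the $\rho^{-1}$ of the single-integral case.
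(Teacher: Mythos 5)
Your proposal follows essentially the same route as the paper's proof: split the inverse-Mellin contour at $\Re s=-s_p$, identify the residues picked up as $\hat I_p$, bound the remaining integral by $t^{s_p}$ times a bound on $\M I$ from Lemma~\ref{lem:upper bound elemint}, and choose $s_p$ by pigeonhole to keep $\rho(-s_p)$ at worst polynomially small using the polynomial growth of the number of poles per unit interval. The only cosmetic difference is in the exact exponent of the pole-count (the paper states $O(p^{l-1})$ poles in $J_p$, giving the advertised $s_p^{l^2}$), but your bookkeeping leads to the same kind of polynomial factor, which is all that is needed.
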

The proof is a generalization of the proof of the corresponding statement from
\cite{novikov}.
\begin{proof}
Poles of $\M I$ are of the form $-\lambda_1^{-1}\sum_{i=1}^jm_i-
\lambda_2^{-1}\sum_{i=j+1}^l n_i$. Since $\lambda_1,\lambda_2>0$, there are
$O(p^{l-1})$ poles on the interval $J_p=[-p-1,-p]$, $p\in\N$. Therefore on each
interval $J_p$ one can find a point $-s_p$ such that
$\rho(-s_p)>O(p^{1-l})=O(s_p^{1-l})$.

For each $p\in \N$ let us split the contour of integration ${\partial \Pi}$
into two parts: boundary of $\Pi_p'=\{-s_p\le \Re s\le M, |\Im s|\le 1\}$ and
boundary of $\Pi_p=\{\Re s\le -s_p, |\Im s|\le 1\}$. Computing residues, we see
that $\frac 1 {2\pi i}\int_{\partial\Pi_p'}t^{-s}\M I ds$ is a partial sum
$\hat{I}_p(t)$ of $\hat{I}$ as defined above. Therefore
$I(t)-\hat{I}_p(t)=\frac 1 {2\pi i}\int_{\partial\Pi_p}t^{-s}\M I ds$. By
Lemma~\ref{lem:upper bound elemint}, $|\M I(s)|\le O(p^{l^2})$ on
$\partial\Pi_p$, and (\ref{eq:tail elemint}) follows.
\end{proof}

\subsection{Iterated integrals}
Here we extend the Theorem~\ref{thm:elemint} to the algebra $\A$ generated by
elementary iterated integrals.

Let $f=P(I_1,...,I_k)\in\A$ be an element in $\A$, where $P\in\C[u_1,...,u_k]$
and $I_1,...,I_k$ are elementary integrals. Substitution of convergent series
from (\ref{eq:elemint sum}) instead of $I_1,...,I_k$ gives a representation of
$f$ as a converging multiple sum of products (of length at most $k$) of
generalized compensators. Collecting similar terms, we obtain a formal series
$\hat{f}$ similar to (\ref{eq:formal elemint}), probably divergent.

\begin{theorem}\label{thm:int}
For any $p\in\N$ there exists $s_p\in[p,p+1]$ such that the partial sum
$\hat{f}_p$ of $\hat{f}$ satisfies the following
\begin{equation}\label{eq:tail int}
|f-\hat{f}_p|\le C s_p^dt^{s_p}
\end{equation}
for some $C,d$ independent of $p$.
\end{theorem}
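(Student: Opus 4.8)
The plan is to reduce the statement for a general $f=P(I_1,\dots,I_k)\in\A$ to the elementary estimate of Theorem~\ref{thm:elemint} by expanding $P$ into monomials and handling each monomial as a product of elementary iterated integrals. First I would write $f$ as a finite $\C$-linear combination of monomials $I_{j_1}\cdots I_{j_r}$ with $r\le k=\deg P$; since the bound \eqref{eq:tail int} is additive and the constants $C,d$ may be taken as the maxima over the finitely many monomials, it suffices to prove the estimate for a single product $J=I_1\cdots I_r$ of elementary iterated integrals, each of length $\le l\le k$. For each factor $I_\nu$ I would invoke Theorem~\ref{thm:elemint}: for the \emph{same} $p$, each $\M I_\nu$ has $O(p^{l-1})$ poles on the interval $[-p-1,-p]$, so the union of the pole sets of all $r$ factors still has only $O(p^{k-1})$ poles there, and hence one can choose a single $s_p\in[p,p+1]$ such that $\rho_\nu(-s_p)>O(s_p^{1-k})$ simultaneously for every $\nu$. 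With this common $s_p$, set $\hat J_p$ to be the partial sum (truncation at $\mu<s_p$) of the product of the formal series $\hat I_\nu$; this is the coefficient-wise truncation of $\hat f$ restricted to the monomial, consistent with the definition of $\hat f_p$.

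Next I would estimate the tail $J-\hat J_p$ by a telescoping identity. Writing $I_\nu=\hat I_{\nu,p}+R_\nu$ with $|R_\nu(t)|\le C_\nu s_p^{l^2}t^{s_p}$ from \eqref{eq:tail elemint}, and noting that $\hat J_p$ differs from $\prod_\nu \hat I_{\nu,p}$ only by terms of order $\ge s_p$ (products of two or more truncated tails, each such term carrying an extra factor $t^{s_p}$ up to polynomial-in-$s_p$ constants), one expands $\prod_\nu I_\nu - \prod_\nu \hat I_{\nu,p}$ into a sum of $2^r-1$ products in which at least one factor is some $R_\nu$. Each such product is bounded on $[0,1]$ by a constant times $s_p^{l^2}t^{s_p}$ times the sup-norms of the remaining factors, which are themselves $O(1)$ plus lower-order pieces; since $t\in[0,1]$ one absorbs all higher powers $t^{j s_p}$ into $t^{s_p}$. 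Collecting, $|J-\hat J_p|\le C s_p^d t^{s_p}$ with $d$ a fixed exponent depending only on $k$ and the maximal length (one can take $d=k l^2$, say), and $C$ depending on the $I_\nu$ but not on $p$. Summing over the monomials of $P$ yields \eqref{eq:tail int}.

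The main obstacle, and the one point requiring care rather than routine bookkeeping, is the \emph{simultaneous} choice of $s_p$: one must be sure that requiring $\rho_\nu(-s_p)$ to be bounded below for all $\nu$ at once does not exhaust the interval $[p,p+1]$. This is exactly where the polynomial pole-counting of Theorem~\ref{thm:elemint} is used — the total number of poles of all factors on $[-p-1,-p]$ is still polynomially bounded in $p$, so a good point $-s_p$ exists by the pigeonhole argument, and the resulting lower bound $\rho_\nu(-s_p)=O(s_p^{1-k})$ feeds into Lemma~\ref{lem:upper bound elemint} to control $|\M I_\nu|$ on the shifted contour. Everything else — the expansion of $P$, the telescoping of the product, and the absorption of higher powers of $t^{s_p}$ on the unit interval — is the kind of finite, constant-tracking computation that the paper's "follows the pattern of \cite{novikov}" remark is meant to cover, so I would state it and refer to \cite{novikov} for the details.
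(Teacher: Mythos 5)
Your approach is genuinely different from the paper's, and it contains a gap at its central step. The paper does not decompose $f$ term by term and invoke Theorem~\ref{thm:elemint} on each factor; instead it works entirely on the Mellin-transform side. The key input is Lemma~\ref{lem:upper bound monomial} (resting on Lemma~\ref{lem:univpol}), which gives the a priori bound $|\M I(s)|\le C\rho(s)^{-\prod l_j}(|s|+1)^d$ for $I$ a product of elementary iterated integrals. One then counts poles of $\M I$ on $[-p-1,-p]$, picks $s_p$ by pigeonhole so that $\rho(-s_p)\ge|s_p|^{-d'}$, and defines $\hat f_p$ as the residue sum $\frac{1}{2\pi i}\int_{\partial\Pi_p'}t^{-s}\M I\,ds$; the tail $f-\hat f_p$ is then the contour integral over $\partial\Pi_p$, bounded directly by the lemma. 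Your proposal does not use Lemma~\ref{lem:upper bound monomial} at all.

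The gap in your telescoping route is the identification of $\hat J_p$ with (something close to) $\prod_\nu \hat I_{\nu,p}$. These are not the same object: $\hat J_p$ is the truncation at $\mu<s_p$ of the formal series for the \emph{product}, while $\prod_\nu\hat I_{\nu,p}$ contains cross-terms $\hat c^{(1)}_{\mu_1,j_1}\cdots\hat c^{(r)}_{\mu_r,j_r}\,t^{\mu_1+\cdots+\mu_r}(\log t)^{j_1+\cdots+j_r}$ with each $\mu_\nu<s_p$ but $\sum\mu_\nu\ge s_p$. You describe the discrepancy as "products of two or more truncated tails, each carrying an extra factor $t^{s_p}$ up to polynomial-in-$s_p$ constants," but these cross-terms are products of \emph{head} terms, not tails, and their coefficients $\hat c^{(\nu)}_{\mu,j}$ are residues of $\M I_\nu$ whose growth in $\mu$ is not a priori polynomial — the series $\hat I_\nu$ is explicitly allowed to be divergent (see the remark before \eqref{eq:formal elemint}), so you cannot simply absorb these coefficients into the constant $C$. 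Your telescoping correctly shows $\bigl|\prod I_\nu-\prod\hat I_{\nu,p}\bigr|\le Cs_p^d t^{s_p}$, and your pigeonhole argument for a common $s_p$ is fine, but without controlling $\prod\hat I_{\nu,p}-\hat J_p$ you have not bounded $|f-\hat f_p|$. The Mellin-side argument avoids this issue entirely because $\hat f_p$ arises directly as a sum of residues of $\M f$ rather than as a product of separate truncations.
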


Before proof of Theorem~\ref{thm:int} let us show that it implies
Theorem~\ref{main2'}.

\begin{corollary}\label{cor:quasian elemint}
 Let $f\in\A$. If $\hat{f}=0$, then $f\equiv 0$ on $[0,1]$. Also, isolated zeros of $f$ cannot accumulate to $0$.
\end{corollary}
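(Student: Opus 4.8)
The plan is to deduce Corollary~\ref{cor:quasian elemint} from Theorem~\ref{thm:int} by the same quasianalyticity argument used in \cite{novikov}. Suppose first that $\hat f = 0$, i.e.\ all the coefficients $\hat c_{\mu,j}$ in the formal series attached to $f$ vanish. Then for every $p\in\N$ the partial sum $\hat f_p$ is identically zero, so the estimate \eqref{eq:tail int} reads $|f(t)|\le C s_p^{d} t^{s_p}$ for all $t\in[0,1]$, with $s_p\in[p,p+1]$ and $C,d$ independent of $p$. Fix $t\in(0,1)$. Then $s_p^{d} t^{s_p}\le (p+1)^{d} t^{p}\to 0$ as $p\to\infty$, since exponential decay beats polynomial growth. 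Hence $f(t)=0$ for every $t\in(0,1)$, and by continuity $f\equiv 0$ on $[0,1]$.

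For the second assertion, assume $\hat f\neq 0$ and let $t^{\mu_0}(\log t)^{j_0}$ be the leading term of $\hat f$, i.e.\ the term with smallest exponent $\mu_0$ (and, among those, largest power $j_0$ of $\log t$), with nonzero coefficient $\hat c_{\mu_0,j_0}$. Choose $p$ large enough that $s_p>\mu_0$; then the leading term is contained in the partial sum $\hat f_p$, and \eqref{eq:tail int} gives
\begin{equation*}
f(t)=\hat f_p(t)+O\!\left(s_p^{d}t^{s_p}\right)=\hat c_{\mu_0,j_0}\,t^{\mu_0}(\log t)^{j_0}\bigl(1+o(1)\bigr)\quad\text{as }t\to 0^+,
\end{equation*}
because every other term of $\hat f_p$, as well as the remainder $O(s_p^{d}t^{s_p})$, is $o\bigl(t^{\mu_0}(\log t)^{j_0}\bigr)$ when $t\to 0^+$. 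In particular $f(t)\neq 0$ for all sufficiently small $t>0$, so the zeros of $f$ cannot accumulate to $0$. Since $f$ is real-analytic on $(0,1)$ (being a polynomial in iterated integrals of analytic forms along the analytic family $\gamma(t)$) and, by the first part, not identically zero, its zeros on $(0,1)$ are isolated; combined with non-accumulation at $0$ this yields the statement.

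Finally, this is exactly what is needed for Theorem~\ref{main2'}: the function $M_k$ restricted to $(0,1)$ is, after the reductions of Section~\ref{plane}, a polynomial in iterated integrals $I(t)=\int_{\gamma(t)}\omega_1\cdots\omega_k$, hence an element of $\A$; if it had zeros accumulating to $0$ it would have to be identically zero near $0$, contradicting that $M_k$ is the first non-vanishing Poincar\'e--Pontryagin function. I do not expect a serious obstacle here: the only point requiring care is that the constants $C,d$ in \eqref{eq:tail int} are genuinely independent of $p$ (which is asserted by Theorem~\ref{thm:int}), so that the limit $p\to\infty$ can be taken for each fixed $t$; and that the finitely many exponents $\mu$ below any bound $s_p$ occurring in $\hat f$ are discrete in $\R$, which holds because $\mu\in\lambda_1^{-1}\Z_{>-M}+\lambda_2^{-1}\Z_{>-M}$ ranges over a finitely generated, hence discrete, subset in each bounded interval — so a well-defined leading term exists.
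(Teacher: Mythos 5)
Your argument is correct and follows essentially the same route as the paper: the first claim by letting $s_p\to\infty$ in \eqref{eq:tail int}, the second by extracting the leading term of $\hat f$ (the paper groups the terms with smallest $\mu$ into a polynomial $P(\log t)$, you isolate the dominant log power, but the two are interchangeable) and using \eqref{eq:tail int} to show the remainder is $o$ of it. The final paragraph relating this to Theorem~\ref{main2'} is extra but accurate.
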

\begin{proof} To prove the first claim, take a limit as $s_p\to+\infty$ in (\ref{eq:tail int}.

Now, if $f\not\equiv0$, then for some $\mu$ we have $|f-t^\mu P(\log
t)|=o(t^\mu)$ with some non-zero polynomial $P$ (where $-\mu$ is the rightmost
pole of $\M f$). This clearly implies the second claim. \end{proof}

The proof of Theorem~\ref{thm:int} occupies the rest of the paper.

\subsubsection{Mellin transform of a product of several generalized
compensators}

For $V=(v_1,...,v_n)\in\R^n$ define $\ell_v(s)=\prod_{i=1}^n(s+v_i)^{-1}$. Let
$V^j=(v^j_1,..,v^j_{n_j})\in\R^{n_j}$, $j=1,..,k$ and define
$\Phi(V^1,...,V^k)(s)= \M\left[\prod\left(\M^{-1}\ell_{V^j}\right)\right]$.

This is a rational function of $s$. We want to show that it depends
polynomially on $\{V^j\}$. Let $\Kappa$ denotes
the set of  function $\kappa:\{1,...,k\}\to\Z$ with the condition
$\kappa(j)\in\{1,...,n_j\}$, and define
$w_{\kappa}=v^1_{\kappa(1)}+...+v^k_{\kappa(k)}$.

\begin{lemma}\label{lem:univpol}
Let $S=S(V^1,...,V^k)=\prod_{\kappa\in\Kappa}(s+w_\kappa)$ be a polynomial in
$\R[V^1,...,V^k;s]$. There exists a polynomial
$R=R_{n_1,...,n_k}\in\R[V^1,...,V^k;s]$ such that $\Phi(V^1,...,V^k)(s)=R
S^{-1}$, $deg_sR<\deg_sS$.
\end{lemma}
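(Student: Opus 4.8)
The goal is to show that the Mellin transform $\Phi(V^1,\dots,V^k)(s)$ of a product of $k$ inverse compensators is a rational function whose numerator depends \emph{polynomially} on the pole-location vectors $V^j$, once we clear the "universal" denominator $S=\prod_{\kappa\in\Kappa}(s+w_\kappa)$. The plan is to proceed by induction on $k$, computing the Mellin transform of a product by reducing it to a convolution-type identity on the level of Mellin transforms.

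First I would set up the base case $k=1$: here $\Phi(V^1)=\ell_{V^1}$ itself, so $R=1$ and $S=\prod_{i=1}^{n_1}(s+v^1_i)$, and $\deg_s R = 0 < n_1 = \deg_s S$. For the inductive step, write $\prod_{j=1}^k \M^{-1}\ell_{V^j} = \left(\M^{-1}\ell_{V^k}\right)\cdot\left(\prod_{j=1}^{k-1}\M^{-1}\ell_{V^j}\right)$, and use the elementary fact that the Mellin transform of a product of two functions on $[0,1]$ corresponds to a (multiplicative) convolution of their Mellin transforms: if $g=\M f_1 \cdot$ and $h = \M f_2$ then $\M(f_1 f_2)(s) = \frac{1}{2\pi i}\int_{(c)} g(\sigma)h(s-\sigma)\,d\sigma$, evaluated by residues. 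Since by induction $\prod_{j<k}\M^{-1}\ell_{V^j}$ has Mellin transform $R_{k-1}S_{k-1}^{-1}$ with $S_{k-1}=\prod_{\kappa'}(s+w_{\kappa'})$ over $\kappa'\in\Kappa_{k-1}$, and $\M^{-1}\ell_{V^k}$ is a linear combination of monomials $t^{-v^k_i}(\log t)^{a}$ (i.e. partial fractions of $\ell_{V^k}$), I would compute $\M\bigl(t^{-v^k_i}(\log t)^a \cdot \M^{-1}(R_{k-1}S_{k-1}^{-1})\bigr)(s)$, which is obtained from $R_{k-1}(s+v^k_i)\,S_{k-1}(s+v^k_i)^{-1}$ by differentiating $a$ times in $s$ (the $\log t$ factors become $s$-derivatives). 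Summing over $i$ and clearing denominators against the full $S=\prod_{\kappa\in\Kappa}(s+w_\kappa)$ gives the numerator $R_{n_1,\dots,n_k}$; since each step multiplies numerator and denominator by polynomials in $s$ and in the $v^j_i$, and since the partial-fraction coefficients of $\ell_{V^k}$ are themselves rational — but with denominators that are differences $v^k_i - v^k_{i'}$ — one must be careful.

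The main obstacle I anticipate is precisely the \textbf{polynomial} (not merely rational) dependence on the $V^j$. The naive partial-fraction decomposition of $\ell_{V^k}=\prod(s+v^k_i)^{-1}$ introduces denominators $\prod_{i'\neq i}(v^k_i-v^k_{i'})$, so term-by-term the intermediate expressions are only rational in $V^k$, with apparent poles along the diagonals $v^k_i=v^k_{i'}$. The resolution is that $\Phi$ is manifestly symmetric and the true singularities are only where two of the \emph{sums} $w_\kappa$ collide with $s$; the diagonal singularities must cancel after summation. I would make this rigorous either by (i) a continuity/removable-singularity argument — $\Phi\cdot S$ is a rational function of $s$ and a \emph{priori} rational in the $V^j$ with possible poles only on a union of diagonal hyperplanes, but it is locally bounded there (being a limit of the generic expression), hence holomorphic, hence polynomial, together with an explicit degree count in $s$ coming from the convolution; or (ii) avoiding partial fractions entirely: represent $\M^{-1}\ell_{V^k}$ as an iterated integral $\int_0^1\cdots$ against the kernel, so that the whole product $\prod\M^{-1}\ell_{V^j}$ is an explicit multiple integral over a simplex, and compute its Mellin transform directly as in the proof of Lemma \ref{lem:mellin elemint}, yielding a manifestly polynomial numerator. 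Approach (ii) is cleaner and parallels the computation already done in \eqref{eq:elem monomial int}, so that is the route I would take; the degree bound $\deg_s R<\deg_s S$ then falls out of the same integration-by-parts bookkeeping that produced the factor $\prod_{j=0}^l(\cdots)^{-1}$ there.
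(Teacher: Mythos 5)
Your proposal correctly identifies the central obstruction: the partial--fraction coefficients of $\ell_{V^j}$ carry denominators $\prod_{i'\ne i}(v^j_i-v^j_{i'})$, so the term--by--term expression for $\Phi$ is a priori only rational in the $V^j$'s with apparent poles on the diagonals. Your option (i) is in fact exactly the paper's argument: write $\Phi=\ell_{V^1}\ast\cdots\ast\ell_{V^k}$ as a formal convolution, expand in partial fractions over the dense set of non--resonant tuples, observe that the denominator of $\Phi$ then divides $S\prod_{i,i',j}(v^j_i-v^j_{i'})$, and eliminate the diagonal factors by noting that $\Phi(V^1,\dots,V^k)(s)$ stays locally bounded as the $v^j_i$ collide, so those factors cannot actually occur; the degree bound then follows because $\Phi$ is a proper rational function of $s$ (it vanishes at $s=\infty$). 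Had you committed to (i), the proposal would match the paper line for line.

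You instead declare you would pursue option (ii), and as stated that route has a gap. The product $\prod_j\M^{-1}\ell_{V^j}$ is not an integral over a single simplex: $\M^{-1}\ell_{V^j}$ is an $(n_j-1)$--fold Mellin convolution, so the product is an integral over a product of simplices (each cut off by $t$), and after adjoining the outer $\int_0^1 t^{s-1}\,dt$ you must further decompose into orderings of all $\sum_j(n_j-1)+1$ variables. For each ordering, the nested integration does produce a product of simple factors $(s+\text{partial sum})^{-1}$ as in \eqref{eq:elem monomial int}, but to conclude "manifestly polynomial numerator" you need to check two things that you assert but do not verify: (a) every partial sum that arises is one of the $w_\kappa$'s, i.e.\ each ordering's denominator is a subproduct of $S$ along a monotone path in $\prod_j\{1,\dots,n_j\}$; and (b) after clearing against $S$ and summing over orderings, $\deg_s R<\deg_s S$. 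Both are true (each ordering contributes a denominator of $s$--degree $\sum n_j-k+1$ whose factors are a chain in $\Kappa$, and $\prod n_j > \sum n_j-k+1$ whenever some $n_j\ge 2$, with the case all $n_j=1$ being trivial), but this is precisely the bookkeeping you would have to supply. Given that option (i) is shorter, already correct in your write--up, and is what the paper does, you should not discard it in favor of a sketch.
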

\begin{proof}
By continuity of both sides it is enough to prove this for a dense subset of
$\prod \R^{n_j}$ consisting of non-resonant tuples $(V^1,...,V^k)$, namely for those  those
tuples for which all  $w_\kappa$ are different.

Let $\C_{\infty}(s)$ be the  ring of rational functions in $s$ vanishing at
infinity, and define convolution $f_1\ast f_2$ for  $f_1,f_2\in\C_{\infty}(s)$
by extending the rule
$$
\frac 1 {s+a}\ast\frac 1 {s+b}=\frac 1 {s+a+b}
$$
by linearity and continuity to the whole $\C_{\infty}(s)$ (in particular,
$(s+a)^{-k}\ast(s+b)^{-l}=(s+a+b)^{-k-l+1}$). Thus defined convolution is
Mellin-dual to the usual product. Therefore
$\Phi(V^1,...,V^k)(s)=\ell_{V^1}\ast...\ast\ell_{V^k}$. Decomposing each factor
into simple fractions
$$
\ell_{V^j}=\sum_i\frac {Res_{v^j_i}\ell_V^j}{s+v^j_i},\quad
Res_{v^j_i}\ell_V^j=\left(\prod_{i'\not=i}(v^j_i-v^j_{i'})\right)^{-1}
$$
and opening brackets, we see that
$$
 \Phi(V^1,...,V^k)(s)=\sum_{\kappa\in\Kappa}\frac{\prod_{j=1}^k Res_{v^j_{\kappa(j)}}\ell_V^j}{s+w_\kappa}.
$$
Reducing to a common denominator, we see that $ \Phi(V^1,...,V^k)(s)$ is a
rational function in $v^j_i,s$, with denominator dividing
$S\prod_{i,i',j}(v^j_i-v^{j}_{i'})$.

We claim that the factors $(v^j_i-v^{j}_{i'})$ do not enter denominator of $
\Phi(V^1,...,V^k)(s)$. Indeed, presence of such factor would mean that $ \Phi(V^1,...,V^k)(s)$ becomes unbounded as $v^j_i$ tends to
$v^{j}_{i'}$ for each
$s\in\C$ , which is not true:  for any tuple $(V^1,...,V^k)$ and every
sufficiently big  $s\in \R$ the function $ \Phi(V^1,...,V^k)(s)$ is locally
bounded near $(V^1,...,V^k,s)$.

\end{proof}

\subsection{Mellin transform of a product of elementary iterated integrals}
Let $I=I_1...I_k$ be a product of several elementary iterated integrals, and
let  order of $I_j$ be $l_j$. Then using representation (\ref{eq:elemint sum})
for $I_j$ and opening brackets, we see that
\begin{equation}\label{eq:monomial sum}
\M I=\sum_{\alpha_1,...,\alpha_k}c_{\alpha_1}...c_{\alpha_k}
\M\left(\prod_{j=1}^k\M^{-1}(\ell^{l_j}_{\alpha_j})\right),
\end{equation}
where $\alpha_j\in\left(\Z_{>-M}\right)^{2l_j}$.

\begin{lemma}\label{lem:upper bound monomial}
Let $\rho(s)$ be the distance from $s$ to the set of poles of $\M I$. Then $|\M
I(s)|\le C \rho^{-\prod l_j}(|s|+1)^d$ for some $d>0$.
\end{lemma}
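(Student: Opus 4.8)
The plan is to exploit the termwise structure of the expansion (\ref{eq:monomial sum}) together with the universal polynomial representation of the Mellin transform of a product of generalized compensators furnished by Lemma~\ref{lem:univpol}. Write $n_j = l_j+1$ for the number of factors in $\ell^{l_j}_{\alpha_j}$, so that each $\M^{-1}\ell^{l_j}_{\alpha_j}$ is the inverse Mellin transform of $\ell_{V^j}$ for a vector $V^j\in\R^{n_j}$ whose entries are the affine combinations $\lambda_1^{-1}\sum_{i\le q}m_i + \lambda_2^{-1}\sum_{i>q}n_i$ appearing in (\ref{eq:compensator}). By Lemma~\ref{lem:univpol} there is a polynomial $R = R_{n_1,\dots,n_k}\in\R[V^1,\dots,V^k;s]$, of $s$-degree strictly less than $\deg_s S = \prod_j n_j$, with
$$
\M\!\left(\prod_{j=1}^k \M^{-1}\ell^{l_j}_{\alpha_j}\right)(s) \;=\; \frac{R(V^1,\dots,V^k;s)}{\prod_{\kappa\in\Kappa}(s+w_\kappa)} .
$$
Crucially the denominator is $S = \prod_\kappa (s+w_\kappa)$, a product of $\prod_j n_j$ linear factors, each of which vanishes at a pole of $\M I$; hence $|S(s)|\ge \rho(s)^{\prod_j n_j}$, and, since $l_j \le n_j$ and only finitely many of the $n_j$ occur (they are bounded by the length $k$ of the iterated integral), the exponent can be absorbed into a constant so that $|S(s)|^{-1}\le \rho(s)^{-\prod_j l_j}(\text{up to the trivial bound by }\rho^{-C})$; more precisely, replacing $\prod l_j$ by the possibly larger $\prod n_j$ changes only the value of $d$ and the implied constant, which is harmless here.

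Next I would estimate the numerator. The entries $v^j_i$ of $V^j$ are bounded in absolute value by $(\lambda_1^{-1}+\lambda_2^{-1})\,|\alpha_j|$, so on the domain $\Re s \le M$ relevant for the inverse Mellin transform one gets $|R(V^1,\dots,V^k;s)|\le C(|\alpha_1|+\dots+|\alpha_k|+|s|)^{d_0}$ for a fixed degree $d_0 = \deg R$ depending only on $n_1,\dots,n_k$, hence only on $k$. Combining with the coefficient bound $|c_{\alpha_j}|\le C 2^{-|\alpha_j|}$ from Lemma~\ref{lem:mellin elemint}, the general term of the series (\ref{eq:monomial sum}) is bounded by
$$
C^k\,2^{-|\alpha_1|-\dots-|\alpha_k|}\,(|\alpha_1|+\dots+|\alpha_k|+|s|)^{d_0}\,\rho(s)^{-\prod_j l_j}.
$$
Summing the geometric-type series over all $\alpha_j\in(\Z_{>-M})^{2l_j}$ (the polynomial factor is swamped by $2^{-\sum|\alpha_j|}$, and produces at worst an extra polynomial factor $(|s|+1)^{d_0}$ after summation) yields $|\M I(s)|\le C\,\rho(s)^{-\prod_j l_j}(|s|+1)^{d}$ with $d$ depending only on $k$, which is the claim.

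The main obstacle I anticipate is the bookkeeping around the denominator: I must be sure that every linear factor $s+w_\kappa$ of $S$ genuinely corresponds to an actual pole of $\M I$ (so that $\rho(s)$, the distance to the \emph{true} pole set, controls $|S(s)|^{-1}$), rather than to a spurious factor that cancels against $R$. Lemma~\ref{lem:univpol} already guarantees $R$ has no extraneous common factors with the $(v^j_i - v^j_{i'})$, but one still needs that after summing over $\alpha$ no new cancellations among different $\kappa$ reduce the pole order — this is where one uses that $\rho(s)$ is defined as the distance to the poles of $\M I$ itself, so the bound $|\M I(s)|\le C\rho(s)^{-\prod l_j}(|s|+1)^d$ holds automatically with $\rho$ being the \emph{correct} (possibly smaller) pole set, and no sharper statement is needed. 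A secondary technical point is uniformity in $\alpha$ of the polynomial $R$'s degree and the constant in Lemma~\ref{lem:univpol}; this is fine because $R_{n_1,\dots,n_k}$ depends only on the dimensions $n_j$, which range over a finite set, and the $V^j$-dependence is polynomial of fixed degree.
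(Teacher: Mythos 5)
Your proposal follows the paper's proof essentially verbatim: apply Lemma~\ref{lem:univpol} termwise to (\ref{eq:monomial sum}), bound the numerator $R$ polynomially in $s$ and in $\sum|\alpha_j|$, bound the denominator $S$ from below by a power of $\rho$, and sum the series using the exponential decay $|c_{\alpha_j}|\le C2^{-|\alpha_j|}$. You have correctly noticed that each $\ell^{l_j}_{\alpha_j}$ has $l_j+1$ linear factors (the index in (\ref{eq:compensator}) runs from $0$ to $l$), so $\deg_s S=\prod_j(l_j+1)$, not $\prod_j l_j$ as both the lemma's statement and the paper's proof assert. Your attempted reconciliation (``the exponent can be absorbed into a constant'') is, however, false: near a pole, where $\rho$ is small, $\rho^{-\prod(l_j+1)}$ is genuinely larger than $\rho^{-\prod l_j}$, and the factor $(|s|+1)^d$ --- bounded near any fixed pole --- cannot compensate for this. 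The correct resolution is simply to state the lemma with exponent $\prod_j(l_j+1)$; this costs nothing downstream, because in the proof of Theorem~\ref{thm:int} one only needs $|\M I(s_p)|\le C|s_p|^{d''}$ along a contour where $\rho(s_p)\ge |s_p|^{-d'}$, and any fixed exponent on $\rho$ is absorbed into $d''$.

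Your second worry --- that cancellations after the $\alpha$-summation might spoil the denominator bound --- is also a genuine subtlety, but your proposed resolution runs in exactly the wrong direction. If poles of individual terms cancel in $\M I$, then $\rho$ (the distance to poles of $\M I$) becomes \emph{larger} than the distance to the roots of the individual $S_\alpha$, so the termwise lower bound $|S_\alpha(s)|\ge\rho(s)^{\deg S_\alpha}$ can \emph{fail} for $s$ near a cancelled pole; the bound with the smaller pole set of $\M I$ is the \emph{sharper} claim, not the weaker one. The honest fix is to take $\rho$ to be the distance to the union $\bigcup_\alpha\{-w_\kappa(\alpha)\}$ of the termwise pole sets: this is the set for which the denominator bound holds termwise, and it is still discrete with cardinality growing polynomially on each unit interval (since $\lambda_1,\lambda_2>0$ force the lattice parameters to be bounded below there), which is all that the choice of $s_p$ in Theorem~\ref{thm:int} requires. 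The paper glosses over this point with the unjustified assertion that the roots of $S$ lie among the poles of $\M I$.
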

\begin{proof}
Let us estimate from above the terms
$\M\left(\prod_{j=1}^k\M^{-1}(\ell^{l_j}_{\alpha_j}\right)$ from
(\ref{eq:monomial sum}). By Lemma~\ref{lem:univpol} it is equal to
$R(V^1,...,V^k;s)/S(V^1,...,V^k;s)$, where $V^j=(v^j_1,...,v^j_{l_j})$ is
defined by
$$
v^j_i=-\lambda_{j1}^{-1}\sum_{p=1}^i m^j_p-\lambda_{j2}^{-1}\sum_{p=i+1}^{l_j}
n^j_p,\quad \alpha_j=(m^j_1,...,n^j_{l_j})\in \Z_{>-M}^{l_j},
$$
as in (\ref{eq:compensator}). This means that  $V^j=L_j\alpha_j$ for some
linear map $L_j:\R^{l_j}\to \R^{l_j}$. Therefore $R$ is a polynomial in
$(s;\alpha_1,...,\alpha_k)$, and
$$
|R(s)|\le \const(1+s)^d(1+\sum|\alpha_j|)^d,\quad\text{for  } d=\deg R\ge0.$$

From the other side, $S$ is a monic polynomial in $s$ of degree $\prod l_j$
with roots in the poles of $\M I$, so $|S(s)|\ge \left(\rho(s)\right)^{\prod
l_j}$. Taken together, this means that
\begin{equation}\label{eq:upper for term}
\left|\M\left(\prod_{j=1}^k\M^{-1}(\ell^{l_j}_{\alpha_j})\right)\right|\le
\const\left(\rho(s)\right)^{-\prod l_j} (1+s)^d(1+\sum|\alpha_j|)^d.
\end{equation}

Now, we know that $|c_{\alpha_j}|\le C2^{-|\alpha|}$ by Lemma~\ref{lem:mellin
elemint}, so we estimate $|\M I(s)|$ from above as
\begin{equation}\label{eq:upper for monomial}
|\M I(s)|\le \const\left(\rho(s)\right)^{-\prod l_j} (1+s)^d
\sum_{\alpha_1,...,\alpha_k}2^{-\sum|\alpha_j|}(1+\sum|\alpha_j|)^d,\end{equation}
which, by convergence of the series, proves the Lemma.
\end{proof}

\subsubsection{Proof of Theorem~\ref{thm:int}}
Let $I$ now be a polynomial in several elementary iterated integrals,
$I=P(I_1,...,I_k)$. The set of poles of the Mellin transform $\M I$ of $I$ is
the union of sets of poles of Mellin transforms of each monomial of $P$, so the
number of poles of $\M I$ on an interval $J_p=[-p-1,-p]$ counted with
multiplicities grows as some power of $p$.

This means that for each $p\in \N$ one can find $s_p\in J_p$ such that the
distance $\rho(s_p)$ from $p$ to the set of poles of $\M I$ will be bigger than
$|s_p|^{-d'}$ for some $d'>0$. Then splitting the contour of integration of the
inverse Mellin transform as in Theorem~\ref{thm:elemint}, we conclude from
Lemma~\ref{lem:upper bound monomial} that  $|\M I|<C|s_p|^{d''}$ on the
$\partial \Pi_p$ for some fixed $d''>0$, and the claim follows.



\begin{thebibliography}{99}
\bibitem{arn}
V. I. Arnold, Arnold's problems,  Springer,  2004.
\bibitem{gav0}L. Gavrilov, Cyclicity of period annuli and principalization of Bautin ideals, \emph{Ergodic Th. and
Dyn. Systems}, 2008, to appear.
\bibitem{gav1}L. Gavrilov, I.D. Iliev,
 The displacement map associated to polynomial unfoldings of planar
Hamiltonian vector fields,   American J. of Math., 127 (2005) 1153-1190.
\bibitem{gav} L. Gavrilov,
Higher order Poincare-Pontryagin functions and iterated path integrals,  Ann.
Fac. Sci. Toulouse Math. (6) 14 (2005), no. 4, pp. 663-682.
\bibitem{grauert} H. Grauert: On Levi's problem and the imbedding of real-analytic manifolds. Ann. Math. 68 (1958) 460-472.
\bibitem{hall} M. Hall,{\em  The Theory of Groups}, AMS Chelsea Publishing, 1976.
\bibitem{katz} N. Katz, Nilpotent connections and the monodromy theorem,Publ. Math. I.H.E.S., \textbf{39} (1970)
175-232.
\bibitem{kho} A.G. Khovanskii,
Real analytic manifolds with the property of finiteness, and complex abelian
integrals, Funktsional. Anal. i Prilozhen. \textbf{18 }(1984), no. 2, 40--50.
\bibitem{novikov} D. Novikov, On limit cycles appearing by polynomial perturbation of Darbouxian integrable systems, to appear in GAFA.
\bibitem{pontryagin} L.S. Pontryagin, \"{U}ber Autoschwingungssysteme, die den Hamiltonischen nahe
liegen, {\it Phys. Z. Sowjetunion} {\bf 6} (1934), 25--28; On dynamics systems
close to Hamiltonian systems, Zh. Eksp. Teor. Fiz. {\bf 4} (1934) 234-238, in
russian.
\bibitem{rous01} R. Roussarie, Melnikov functions and Bautin ideal.
Qual. Theory Dyn. Syst. 2 (2001), no. 1, 67--78.
\bibitem{rous98} R. Roussarie, {\it  Bifurcation of planar vector fields and Hilbert's sixteenth
problem}, Progress in Mathematics, vol. 164, Birkh\"{a}user Verlag, Basel (1998).
\bibitem{rous89} R. Roussarie,
 Cyclicit\'{e} finie des lacets et des
points cuspidaux, Nonlinearity 2 (1989), no. 1, 73--117.
\bibitem{serre}
J.-P. Serre, {\em Lie algebras and {L}ie groups},\textbf{1500}  {\em Lecture
  Notes in Mathematics}, Springer-Verlag, Berlin, 2006.
\bibitem{var} A.N. Varchenko,
Estimation of the number of zeros of an abelian integral depending on a
parameter, and limit cycles. Funktsional. Anal. i Prilozhen. 18 (1984), no. 2,
14--25.
 \end{thebibliography}
\end{document}